\providecommand{\U}[1]{\protect\rule{.1in}{.1in}}
\providecommand{\U}[1]{\protect\rule{.1in}{.1in}}
\providecommand{\U}[1]{\protect\rule{.1in}{.1in}}
\providecommand{\U}[1]{\protect\rule{.1in}{.1in}}
\providecommand{\U}[1]{\protect\rule{.1in}{.1in}}
\def\leq{\leqslant}
\def\geq{\geqslant}
\def\bibaut#1{{\sc #1}}
\def\phi{\varphi}
\def\ro[#1]{{\textcolor{red}{#1}}}
\theoremstyle{theorem}
\newtheorem{Theorem}{Theorem}[section]
\newtheorem{theoremn}{Theorem}
\newtheorem{Lemma}[Theorem]{Lemma}
\newtheorem{Proposition}[Theorem]{Proposition}
\theoremstyle{definition}
\newtheorem{claim}{Claim}
\newtheorem{Construction}[Theorem]{Construction}
\newtheorem{Definition}[Theorem]{Definition}
\newtheorem{Remark}[Theorem]{Remark}
\newtheorem{Example}[Theorem]{Example}
\numberwithin{equation}{section}
\newcommand{\arXiv}[1]{\href{http://arxiv.org/abs/#1}{arXiv:#1}}
\newcommand{\Aut}{\operatorname{Aut}}
\newcommand{\mult}{\operatorname{mult}}
\newcommand{\Exc}{\operatorname{Exc}}
\DeclareMathOperator{\Bl}{Bs}
\DeclareMathOperator{\Ker}{Ker}
\DeclareMathOperator{\im}{Im}
\newcommand\Span[1]{\langle{#1}\rangle}
\newcommand\iso{\cong}
\newcommand{\f}{\varphi}
\renewcommand{\H}{\mathcal{H}}
\renewcommand{\O}{\mathcal{O}}
\renewcommand{\P}{\mathbb{P}}
\begin{document}
\title{On the automorphisms of Hassett's moduli spaces}

\author[Alex Massarenti]{Alex Massarenti}
\address{\sc Alex Massarenti\\
IMPA\\
Estrada Dona Castorina 110\\
22460-320 Rio de Janeiro\\ Brazil}
\email{massaren@impa.br}

\author[Massimiliano Mella]{Massimiliano Mella}
\address{\sc Massimiliano Mella\\ Dipartimento di Matematica e Informatica\\
Universit\`a di Ferrara\\
Via Machiavelli 35\\
44100 Ferrara\\ Italy}
\email{mll@unife.it}

\date{\today}
\subjclass[2010]{Primary 14H10, 14J50; Secondary 14D22, 14D23, 14D06}
\keywords{Moduli space of curves, Hassett's moduli spaces, fiber type morphism, automorphisms}
\thanks{Partially supported by Progetto PRIN 2010 "\textit{Geometria sulle variet\`a algebriche}" MIUR and GRIFGA. This work was done while the first author was a Post-Doctorate at IMPA, funded by CAPES-Brazil.}

\maketitle

\begin{abstract}
Let $\overline{\mathcal{M}}_{g,A[n]}$ be the moduli stack parametrizing weighted stable curves, and let $\overline{M}_{g,A[n]}$ be its coarse moduli space. These spaces have been introduced by \textit{B. Hassett}, as compactifications of $\mathcal{M}_{g,n}$ and $M_{g,n}$ respectively, by assigning rational weights $A = (a_{1},...,a_{n})$, $0< a_{i} \leq 1$ to the markings. In particular, the classical Deligne-Mumford compactification arises for $a_1 = ... = a_n = 1$. In genus zero some of these spaces appear as intermediate steps of the blow-up construction of $\overline{M}_{0,n}$ developed by \textit{M. Kapranov}, while in higher genus they may be related to the LMMP on $\overline{M}_{g,n}$. We compute the automorphism groups of most of the Hassett's spaces appearing in the Kapranov's blow-up construction. Furthermore, if $g\geq 1$ we compute the automorphism groups of all Hassett's spaces. In particular, we prove that if $g\geq 1$ and $2g-2+n\geq 3$ then the automorphism groups of both $\overline{\mathcal{M}}_{g,A[n]}$ and $\overline{M}_{g,A[n]}$ are isomorphic to a subgroup of $S_{n}$ whose elements are permutations preserving the weight data in a suitable sense.
\end{abstract}

\tableofcontents

\section*{Introduction}  
The stack $\overline{\mathcal{M}}_{g,n}$, parametrizing Deligne-Mumford $n$-pointed genus $g$ stable curves, and its coarse moduli space $\overline{M}_{g,n}$ have been among the most studied objects in algebraic geometry for several decades.\\
\indent In \cite{Ha} \textit{B. Hassett} introduced new compactifications $\overline{\mathcal{M}}_{g,A[n]}$ of the moduli stack $\mathcal{M}_{g,n}$ and $\overline{M}_{g,A[n]}$ for the coarse moduli space $M_{g,n}$, by assigning rational weights $A = (a_{1},...,a_{n})$, $0< a_{i}\leq 1$ to the markings. In genus zero some of these spaces appear as intermediate steps of the blow-up construction of $\overline{M}_{0,n}$ developed by \textit{M. Kapranov} in \cite{Ka} and some of them turn out to be Mori Dream Spaces \cite[Section 6]{AM}, while in higher genus they may be related to the LMMP on $\overline{M}_{g,n}$ \cite{Moo}.

In this paper we deal with fibrations and automorphisms of these Hassett's spaces, see also \cite{MM} for the moduli problem that allows some of the sections to have weight zero. Our approach consists in extending some techniques introduced by \textit{A. Bruno} and the authors in \cite{BM1}, \cite{BM2} and \cite{Ma} to study fiber type morphisms from Hassett's spaces, and then apply this knowledge to compute their automorphism groups.

The biregular automorphisms of the moduli space $M_{g,n}$ of $n$-pointed genus $g$-stable curves, and of its Deligne-Mumford compactification $\overline{M}_{g,n}$ have been studied in a series of papers. For instance \cite{BM2}, \cite{Ro}, \cite{Li1}, \cite{Li2}, \cite{GKM} and \cite{Ma}. In \cite{BM1} and \cite{BM2}, \textit{A. Bruno} and the second author, thanks to Kapranov's work \cite{Ka}, managed to study the regular fibrations and the automorphisms of $\overline{M}_{0,n}$ using techniques of classical projective geometry.

Here we push forward these techniques to study Hassett's spaces. The
main novelties are that not all forgetful maps are well defined as
morphisms, and not all permutations of the markings define an
automorphism of the space $\overline{M}_{g,A[n]}$. Indeed in order to
define an automorphism, permutations have to preserve the weight data
in a suitable sense, see Definition \ref{atrans}. We denote by
$\mathcal{S}_{A[n]}$ the group of permutations inducing automorphisms
of $\overline{M}_{g,A[n]}$ and $\overline{\mathcal{M}}_{g,A[n]}$.

As for the ancestors of this paper, \cite{BM2}, \cite{Ma}, the main tool
to study the automorphism groups is a factorization theorem that yields
a morphism of groups between $\Aut(\overline{M}_{g,A[n]})$ and a permutation group. We are able to extend \cite[Theorem 4.1]{BM2} and
\cite[Theorem
0.9]{GKM} to some Hassett's moduli spaces at the expense of  producing a factorization  via
rational maps instead of morphisms. Nonetheless, this is enough to conclude in many cases.

In genus zero we are able 
 to compute the automorphisms of most of the intermediate steps of Kapranov's construction, see Construction \ref{kblu} for the details. 
The results can be summarized as follows.
\begin{theoremn}\label{th:aut0}
For the Hassett's spaces appearing in Construction \ref{kblu} we have
\begin{itemize}
\item[-] $\Aut(\overline{M}_{0,A_{1,n-3}[n]})\cong (\mathbb{C}^{*})^{n-3}\rtimes (S_2\times S_{n-2})$.
\end{itemize}
Furthermore, if $2\leq r\leq n-4$ then:
\begin{itemize}
\item[-] $\Aut(\overline{M}_{0,A_{r,1}[n]})\cong S_{n-r}\times S_r$, 
\item[-] $\Aut(\overline{M}_{0,A_{r,s}[n]})\cong S_{n-r-1}\times S_r$, if $1 < s < n-r-2$,
\item[-] $\Aut(\overline{M}_{0,A_{r,n-r-2}[n]})\cong S_{n-r-1}\times S_{r+1}$. 
\end{itemize}
Finally, if $r = n-3$ then $s =1$, $\overline{M}_{0,A_{n-3,1}[n]}\cong\overline{M}_{0,n}$, and $\Aut(\overline{M}_{0,A_{n-3,1}[n]})\cong S_n$ for any $n\geq 5$.
\end{theoremn}
Note that the Hassett's space $\overline{M}_{0,A_{1,n-3}[n]}$, is the Losev-Manin's moduli space $\overline{L}_{n-2}$ introduced in \cite{LM}. Furthermore, since $\overline{M}_{0,A_{n-3,1}[n]}\cong \overline{M}_{0,n}$ we recover \cite[Theorem 3]{BM2}.

In higher genus, plugging in  arguments from  \cite[Sections
3,4]{Ma}, 
we prove the following statement.
\begin{theoremn}
Let $\overline{\mathcal{M}}_{g,A[n]}$ be the Hassett's moduli stack parametrizing weighted $n$-pointed genus $g$ stable curves, and let $\overline{M}_{g,A[n]}$ be its coarse moduli space. If $g\geq 1$ and $2g-2+n\geq 3$ then
$$\Aut(\overline{\mathcal{M}}_{g,A[n]})\cong\Aut(\overline{M}_{g,A[n]})\cong \mathcal{S}_{A[n]}.$$
Furthermore 
\begin{itemize}
\item[-] $\Aut(\overline{M}_{1,A[2]})\cong (\mathbb{C}^{*})^{2}$ while $\Aut(\overline{\mathcal{M}}_{1,A[2]})$ is trivial,
\item[-] $\Aut(\overline{M}_{1,A[1]})\cong PGL(2)$ while $\Aut(\overline{\mathcal{M}}_{1,A[1]})\cong\mathbb{C}^{*}$.
\end{itemize}
\end{theoremn}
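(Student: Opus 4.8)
The plan is to prove each isomorphism by a pair of opposite inclusions and to dispose of the two low-dimensional cases by explicit geometry. One inclusion is immediate: by Definition~\ref{atrans} every $\sigma\in\mathcal{A}_{A[n]}$ relabels the markings in a way compatible with the weight data, hence acts on the weighted moduli functor and induces an automorphism of both the stack $\overline{\mathcal{M}}_{g,A[n]}$ and its coarse space $\overline{M}_{g,A[n]}$. This produces injections $\mathcal{A}_{A[n]}\hookrightarrow\Aut(\overline{\mathcal{M}}_{g,A[n]})$ and $\mathcal{A}_{A[n]}\hookrightarrow\Aut(\overline{M}_{g,A[n]})$, and the entire content of the theorem is the reverse inclusion, which I would obtain by adapting the strategy of \cite[Sections~3,4]{Ma}.

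For the upper bound I would work first on the coarse space. Fix $\phi\in\Aut(\overline{M}_{g,A[n]})$. The key point is that $\phi$ must preserve the collection of fiber type morphisms with one-dimensional general fibre, since these are characterized intrinsically and, by the weighted fibration theorem (the analogue for $g\geq 1$ of \cite[Theorem~0.9]{GKM}, transported along the reduction morphism $\rho\colon\overline{M}_{g,n}\to\overline{M}_{g,A[n]}$), they are exactly the forgetful morphisms $\overline{M}_{g,A[n]}\to\overline{M}_{g,A'[n-1]}$ that are actually defined. Thus $\phi$ permutes the admissible forgetful fibrations and determines a permutation $\sigma\in S_{n}$; because $\phi$ can only carry a defined forgetful map to a defined forgetful map, $\sigma$ sends admissible markings to admissible markings, that is $\sigma\in\mathcal{A}_{A[n]}$. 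Replacing $\phi$ by its composite with the automorphism induced by $\sigma^{-1}$, I may assume $\phi$ fixes each admissible forgetful fibration; comparing $\phi$ with the identity on the bases, where $2g-2+(n-1)$ still allows one to invoke the inductive hypothesis together with $\Aut(\overline{M}_{g,n})\cong S_{n}$ from \cite[Theorem~3.9]{Ma}, then forces $\phi$ to be the identity. This gives $\Aut(\overline{M}_{g,A[n]})\cong\mathcal{A}_{A[n]}$.

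To pass to the stack I would use the canonical coarsening morphism $\Aut(\overline{\mathcal{M}}_{g,A[n]})\to\Aut(\overline{M}_{g,A[n]})$. In the range $2g-2+n\geq 3$ the generic fibre of the universal curve has no nontrivial automorphisms, so a stacky automorphism restricting to the identity on the coarse space and on the universal family is trivial; hence this map is injective. Its image contains $\mathcal{A}_{A[n]}$, which already exhausts the coarse automorphism group, so the map is an isomorphism and $\Aut(\overline{\mathcal{M}}_{g,A[n]})\cong\mathcal{A}_{A[n]}$ as claimed.

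The remaining cases $(g,n)=(1,1)$ and $(1,2)$, where $2g-2+n\leq 2$, lie outside the fibration argument and I would settle them by identifying the spaces directly. For $\overline{M}_{1,A[1]}$ the single weight does not affect stability, so the coarse space is the compactified $j$-line $\mathbb{P}^{1}$ and $\Aut\cong PGL(2)$, while a direct analysis of the stack structure, which still carries the generic elliptic involution, yields $\Aut(\overline{\mathcal{M}}_{1,A[1]})\cong\mathbb{C}^{*}$. For $\overline{M}_{1,A[2]}$ I would realize the coarse space as an explicit rational surface equipped with a two-dimensional torus action, reading off $(\mathbb{C}^{*})^{2}$, and then verify that none of this torus lifts and that there are no further stacky automorphisms, so that $\Aut(\overline{\mathcal{M}}_{1,A[2]})$ is trivial. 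The main obstacle throughout is the weighted fibration theorem: because not every forgetful map is a morphism on a Hassett space, one must show that the fibrations that do survive still recover the full marking data and are genuinely permuted by $\phi$; once this is secured, the weight bookkeeping and the coarse-versus-stack comparison are essentially formal.
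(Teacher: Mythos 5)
Your overall architecture coincides with the paper's (a fibration theorem identifying one\--dimensional fibrations with forgetful maps, a permutation homomorphism $\chi$, computation of its image and kernel by induction on $n$, and passage to the stack via injectivity of the coarsening map), but two of your key steps have genuine gaps. First, the inclusion $\mathcal{A}_{A[n]}\hookrightarrow\Aut(\overline{M}_{g,A[n]})$ is not ``immediate'': Definition \ref{atrans} only constrains collisions of \emph{three or more} marked points (the condition is imposed for $r\geq 2$), so an admissible transposition $i\leftrightarrow j$ may send a stable configuration in which $x_i$ coincides with a single other point $x_h$ (allowed because $a_i+a_h\leq 1$) to a configuration with $a_j+a_h>1$, which is \emph{not} stable of type $(g,A)$. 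Hence an admissible permutation does not literally act on the weighted moduli functor; a priori it only induces a birational map, and one must prove that this map extends to an isomorphism. The paper does exactly this in Proposition \ref{atra}, by showing the map is an isomorphism off a closed subset of codimension at least two and then invoking normality and the $S_2$ property (Remark \ref{normality} and Lemma \ref{s2lemma}). Your proposal skips this entirely.

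Second, your argument that the permutation $\sigma$ attached to $\phi$ lies in $\mathcal{A}_{A[n]}$ does not work: by Lemma \ref{efm}, for $g\geq 1$ (in the range considered) \emph{every} single-point forgetful map is a well-defined morphism, so the condition that ``$\phi$ can only carry a defined forgetful map to a defined forgetful map'' is vacuous and places no restriction on $\sigma$ whatsoever. The actual constraint comes from the commutative square: $\phi$ forces an isomorphism $\overline{M}_{g,A_{i}[n-1]}\cong\overline{M}_{g,A_{\sigma(i)}[n-1]}$ between targets carrying \emph{different} weight data, and one needs the nontrivial equivalence of Proposition \ref{atra} (specifically $(c)\Rightarrow(a)$, proved via Hassett's criterion for a reduction morphism to be an isomorphism) to deduce that $\sigma$ is a product of admissible transpositions; this is the content of Lemma \ref{surj}. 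Smaller but real omissions: your induction has no base case (the paper needs Proposition \ref{Mg1}, and the case $g=1$, $n=3$ requires a separate boundary argument with $\Delta_{0,2}$), and the triviality of $\ker\chi$ is asserted rather than proved --- the paper concludes by restricting $\phi$ to general fibers of the forgetful maps and using that a general pointed curve with $2g-2+n\geq 3$ has no nontrivial automorphisms, which is precisely where that hypothesis enters. Finally, for $(g,n)=(1,1),(1,2)$ your explicit-surface approach is workable but unnecessary: the paper simply observes that for $n\leq 2$ the reduction $\overline{M}_{1,n}\to\overline{M}_{1,A[n]}$ is an isomorphism and quotes the known answers for $\overline{M}_{1,1}$ and $\overline{M}_{1,2}$.
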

In this case when $A[n] = (1,...,1)$ we recover the main result of \cite{Ma} on the automorphism groups of $\overline{M}_{g,n}$ and $\overline{\mathcal{M}}_{g,n}$.\\  
\indent The paper is organized as follows. In Section \ref{HK} we recall Hassett's and Kapranov's constructions. Section \ref{FIB} is devoted to the proof of the factorization theorems. Finally, in Section \ref{AUT}, we compute the automorphism groups.
\subsubsection*{Acknowledgments}
We would like to thank \textit{J. Hwang} for pointing out a gap in a first version of the factorization theorems.

\section{Hassett's moduli spaces and Kapranov's realizations of $\overline{M}_{0,n}$}\label{HK}
We work over an algebraically closed field of characteristic zero. Let $S$ be a Noetherian scheme and $g,n$ two non-negative integers. A family of nodal curves of genus $g$ with $n$ marked points over $S$ consists of a flat proper morphism $\pi:C\rightarrow S$ whose geometric fibers are nodal connected curves of arithmetic genus $g$, and sections $s_{1},...,s_{n}$ of $\pi$. A collection of input data $(g,A) := (g, a_{1},...,a_{n})$ consists of an integer $g\geq 0$ and the weight data: an element $(a_{1},...,a_{n})\in\mathbb{Q}^{n}$ such that $0<a_{i}\leq 1$ for $i = 1,...,n$, and
$$2g-2 + \sum_{i = 1}^{n}a_{i} > 0.$$
\begin{Definition}\label{defha}
A family of nodal curves with marked points $\pi:(C,s_{1},...,s_{n})\rightarrow S$ is stable of type $(g,A)$ if
\begin{itemize}
\item[-] the sections $s_{1},...,s_{n}$ lie in the smooth locus of $\pi$, and for any subset $\{s_{i_{1}},..., s_{i_{r}}\}$ with non-empty intersection we have $a_{i_{1}} +...+ a_{i_{r}} \leq 1$,
\item[-] $\omega_{\pi}(\sum_{i=1}^{n}a_{i}s_{i})$ is $\pi$-relatively ample, where $\omega_{\pi}$ is the relative dualizing sheaf.
\end{itemize}
\end{Definition}
\textit{B. Hassett} in \cite[Theorem 2.1]{Ha} proved that given a collection $(g,A)$ of input data, there exists a connected Deligne-Mumford stack $\overline{\mathcal{M}}_{g,A[n]}$, smooth and proper over $\mathbb{Z}$, representing the moduli problem of pointed stable curves of type $(g,A)$. The corresponding coarse moduli scheme $\overline{M}_{g,A[n]}$ is projective over $\mathbb{Z}$.\\ 
Furthermore, by \cite[Theorem 3.8]{Ha} a weighted pointed stable curve admits no infinitesimal automorphisms, and its infinitesimal deformation space is unobstructed of dimension $3g-3+n$. Then $\overline{\mathcal{M}}_{g,A[n]}$ is a smooth Deligne-Mumford stack of dimension $3g-3+n$.   
\begin{Remark}\label{normality}
Since $\overline{\mathcal{M}}_{g,A[n]}$ is smooth as a Deligne-Mumford stack the coarse moduli space $\overline{M}_{g,A[n]}$ has finite quotient singularities, that is $\rm\acute{e}$tale locally it is isomorphic to a quotient of a smooth scheme by a finite group. In particular, $\overline{M}_{g,A[n]}$ is normal. 
\end{Remark}
For fixed $g,n$, consider two collections of weight data $A[n],B[n]$ such that $a_i\geq b_i$ for any $i = 1,...,n$. Then there exists a birational \textit{reduction morphism}
$$\rho_{B[n],A[n]}:\overline{M}_{g,A[n]}\rightarrow\overline{M}_{g,B[n]}$$
associating to a curve $[C,s_1,...,s_n]\in\overline{M}_{g,A[n]}$ the curve $\rho_{B[n],A[n]}([C,s_1,...,s_n])$ obtained by collapsing components of $C$ along which $\omega_C(b_1s_1+...+b_ns_n)$ fails to be ample, where $\omega_C$ denote the dualizing sheaf of $C$.\\
Furthermore, for any $g$ consider a collection of weight data $A[n]=(a_1,...,a_n)$ and a subset $A[r]:=(a_{i_{1}},...,a_{i_{r}})\subset A[n]$ such that $2g-2+a_{i_{1}}+...+a_{i_{r}}>0$. Then there exists a \textit{forgetful morphism} 
$$\pi_{A[n],A[r]}:\overline{M}_{g,A[n]}\rightarrow\overline{M}_{g,A[r]}$$
associating to a curve $[C,s_1,...,s_n]\in\overline{M}_{g,A[n]}$ the curve $\pi_{A[n],A[r]}([C,s_1,...,s_n])$ obtained by collapsing components of $C$ along which $\omega_C(a_{i_{1}}s_{i_{1}}+...+a_{i_{r}}s_{i_{r}})$ fails to be ample. For the details see \cite[Section 4]{Ha}.

\begin{Remark}\label{n<3}
If $n\leq 2$ the reduction morphism $\rho:\overline{M}_{g,n}\rightarrow\overline{M}_{g,A[n]}$ contracts at most rational tails with two marked points, such rational tails do not have moduli. Therefore $\rho$ is an isomorphism and $\overline{M}_{g,A[n]}\cong \overline{M}_{g,n}$, see also \cite[Corollary 4.7]{Ha}.
\end{Remark}

\indent In the following we will be especially interested in the boundary of $\overline{M}_{g,A[n]}$. The boundary of $\overline{M}_{g,A[n]}$, as for $\overline{M}_{g,n}$, has a stratification whose loci, called strata, parametrize curves of a fixed topological type and with a fixed configuration of the marked points.\\
We denote by $\Delta_{irr}$ the locus in $\overline{M}_{g,A[n]}$ parametrizing irreducible nodal curves with $n$ marked points, and by $\Delta_{i,P}$ the locus of curves with a node which divides the curve into a component of genus $i$ containing the points indexed by $P$ and a component of genus $g-i$ containing the remaining points. Note that in $\overline{M}_{g,A[n]}$ may appear boundary divisors parametrizing smooth curves. For instance, as soon as there exist two indices $i,j$ such that $a_i+a_j\leq 1$ we get a boundary divisor whose general point represents a smooth curve where the marked points labeled by $i$ and $j$ collide. 

\subsubsection*{Kapranov's blow-up construction}
We follow \cite{Ka}. Let $(C,x_{1},...,x_{n})$ be a genus zero $n$-pointed stable curve. The dualizing sheaf $\omega_{C}$ of $C$ is invertible, see \cite{Kn}. By \cite[Corollaries 1.10 and 1.11]{Kn} the sheaf $\omega_{C}(x_{1}+...+x_{n})$ is very ample and has $n-1$ independent sections. Then it defines an embedding $\phi:C\rightarrow\mathbb{P}^{n-2}$. In particular, if $C\cong\mathbb{P}^{1}$ then $\deg(\omega_{C}(x_{1}+...+x_{n})) = n-2$, $\omega_{C}(x_{1}+...+x_{n})\cong\phi^{*}\mathcal{O}_{\mathbb{P}^{n-2}}(1)\cong\mathcal{O}_{\mathbb{P}^{1}}(n-2)$, and $\phi(C)$ is a degree $n-2$ rational normal curve in $\mathbb{P}^{n-2}$. By \cite[Lemma 1.4]{Ka} if $(C,x_{1},...,x_{n})$ is stable the points $p_{i} = \phi(x_{i})$ are in linear general position in $\mathbb{P}^{n-2}$.\\
This fact combined with a careful analysis of limits in $\overline{M}_{0,n}$ of $1$-parameter families in $M_{0,n}$ led \textit{M. Kapranov} to prove the following theorem \cite[Theorem 0.1]{Ka}:

\begin{Theorem}\label{kaphilb}
Let $p_{1},...,p_{n}\in\mathbb{P}^{n-2}$ be points in linear general position, and let $V_{0}(p_{1},...,p_{n})$ be the scheme parametrizing rational normal curves through $p_{1},...,p_{n}$. Consider $V_{0}(p_{1},...,p_{n})$ as a subscheme of the Hilbert scheme $\mathcal{H}$ parametrizing subschemes of $\mathbb{P}^{n-2}$. Then
\begin{itemize}
\item[-] $V_{0}(p_{1},...,p_{n})\cong M_{0,n}$.
\item[-] Let $V(p_{1},...,p_{n})$ be the closure of $V_{0}(p_{1},...,p_{n})$ in $\mathcal{H}$. Then $V(p_{1},...,p_{n})\cong\overline{M}_{0,n}$. 
\end{itemize}
\end{Theorem}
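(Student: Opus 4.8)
The plan is to prove the two assertions separately, identifying first $V_0(p_1,\ldots,p_n)$ with $M_{0,n}$ by a group action and then the closure $V(p_1,\ldots,p_n)$ with $\overline{M}_{0,n}$ by analysing flat limits in $\mathcal{H}$. For the open part, recall that a smooth $n$-pointed rational curve $(C,x_1,\ldots,x_n)$ is embedded by the very ample bundle $\omega_C(x_1+\cdots+x_n)$ as a rational normal curve of degree $n-2$ in $\mathbb{P}^{n-2}$, and that by \cite[Lemma 1.4]{Ka} the images of the markings lie in linear general position, hence form a projective frame of $\mathbb{P}^{n-2}$. Since $PGL(n-1)$ acts simply transitively on such frames, there is a unique projectivity carrying those images to the fixed points $p_1,\ldots,p_n$; transporting the curve by it yields a rational normal curve through $p_1,\ldots,p_n$, that is a point of $V_0(p_1,\ldots,p_n)$. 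I would check this is independent of the chosen isomorphism $C\cong\mathbb{P}^1$, the ambiguity being absorbed by the induced copy of $PGL(2)\subset PGL(n-1)$ stabilising the curve. The inverse sends a rational normal curve through the $p_i$ to the abstract smooth rational curve it carries, marked at the $p_i$. Promoting these mutually inverse assignments to morphisms of schemes (via the universal family over $M_{0,n}$ and the Hilbert scheme structure of $\mathcal{H}$) gives $V_0\cong M_{0,n}$; the dimension count agrees, since on the $(n-3)(n+1)$-dimensional space of rational normal curves in $\mathbb{P}^{n-2}$ each of the $n$ general points imposes $n-3$ independent conditions, leaving a family of dimension $(n-3)(n+1)-n(n-3)=n-3=\dim M_{0,n}$.

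For the compactification I would use that $\overline{M}_{0,n}$ is a fine moduli space carrying a universal curve $\pi\colon\mathcal{C}\to\overline{M}_{0,n}$ with sections $\sigma_1,\ldots,\sigma_n$. By \cite{Kn} the relative twisted dualizing sheaf $\omega_\pi(\sigma_1+\cdots+\sigma_n)$ is relatively very ample with $n-1$ sections on every stable fibre, so it embeds $\mathcal{C}$ fibrewise into $\mathbb{P}^{n-2}$; as the $n$ markings of any stable genus-$0$ curve still map to points in linear general position (a consequence of the stability condition, each component carrying at least three special points), the fibrewise normalizing projectivity sending them to $p_1,\ldots,p_n$ is well-defined and produces a flat family of subschemes of $\mathbb{P}^{n-2}$, hence a morphism $f\colon\overline{M}_{0,n}\to\mathcal{H}$. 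By construction $f$ restricts over $M_{0,n}$ to the isomorphism onto $V_0$ built above, so $f$ is birational onto its image; and since $\overline{M}_{0,n}$ is proper its image is closed, contains the dense subset $V_0$, and is contained in $V(p_1,\ldots,p_n)$, whence $f(\overline{M}_{0,n})=V(p_1,\ldots,p_n)$.

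It remains to prove that $f$ is an isomorphism, and this is the heart of the matter. The core step, which is Kapranov's careful analysis of limits, is to determine exactly the flat limit in $\mathcal{H}$ of a one-parameter family of rational normal curves through $p_1,\ldots,p_n$ degenerating to the boundary. I would show that for a general such degeneration the limit is a reduced connected union of two rational normal curves, of degrees $|P|-1$ and $n-|P|-1$, spanning the complementary subspaces $\langle p_i : i\in P\rangle$ and $\langle p_j : j\notin P\rangle$ and meeting at the single point in which these subspaces intersect, with no embedded or extraneous components, and that this configuration is precisely the image under $|\omega_\pi(\sum\sigma_i)|$ of the corresponding stable limit curve; deeper degenerations break the curve further along complementary spans in the same fashion. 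This matches the added loci with the boundary divisors $\Delta_{0,P}$ of $\overline{M}_{0,n}$ and their intersections, and shows that $f$ is bijective.

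The principal obstacle is upgrading this bijective birational morphism to an isomorphism, which amounts to controlling the limits tightly enough to conclude that $V(p_1,\ldots,p_n)$ is normal: one must rule out non-reduced or lower-dimensional degenerations in $\mathcal{H}$ and verify that distinct boundary strata are not collapsed. Granting normality of $V$, the morphism $f$ is finite (being bijective and proper) and birational onto a normal target, hence an isomorphism. Equivalently, and this is the route Kapranov follows, one realises $V(p_1,\ldots,p_n)$ through an explicit iterated blow-up construction along the linear spans of the points $p_i$, which is manifestly smooth and which one then identifies with the blow-up tower realising $\overline{M}_{0,n}$; smoothness of this model makes the normality hypothesis automatic and the identification of the strata transparent.
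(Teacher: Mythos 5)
This statement is quoted verbatim from \cite[Theorem 0.1]{Ka} and the paper offers no proof of it, so there is no internal argument to compare yours against; I can only measure your sketch against Kapranov's original proof, whose strategy you have in fact reproduced. The parts you carry out are essentially correct: the identification of $V_{0}(p_{1},...,p_{n})$ with $M_{0,n}$ via the embedding by $\omega_{C}(x_{1}+\cdots+x_{n})$ and the simple transitivity of $PGL(n-1)$ on projective frames is right (though the cleaner formulation is that the embedding by the \emph{complete} linear system is canonical up to the $PGL(n-1)$-ambiguity of a basis of $H^{0}$, which the frame condition kills outright; no auxiliary isomorphism $C\cong\mathbb{P}^{1}$ and no appeal to a stabilizing $PGL(2)$ is needed), the dimension count is correct, and the construction of the classifying morphism $f:\overline{M}_{0,n}\rightarrow\mathcal{H}$ from the relatively very ample sheaf $\omega_{\pi}(\sigma_{1}+\cdots+\sigma_{n})$ of \cite{Kn} is the right mechanism. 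Your description of the expected boundary limits (two rational normal curves of degrees $|P|-1$ and $n-|P|-1$ spanning complementary linear subspaces meeting in one point) is also the correct picture.

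The genuine gaps are exactly the two steps you flag but do not execute, and they are the entire content of Kapranov's theorem rather than technical afterthoughts. First, the flat-limit computation: you must show that the limit in $\mathcal{H}$ of a one-parameter family of rational normal curves through $p_{1},...,p_{n}$ is the \emph{reduced} nodal curve you describe, with no embedded points and no extra components, and that this limit depends only on the stable limit of the family in $\overline{M}_{0,n}$ (so that $V$ does not see extra data and $f$ is surjective and injective on the boundary). Constancy of the Hilbert polynomial alone does not rule out embedded points at the nodes, and this is where the ``careful analysis of limits'' the paper alludes to actually lives. Second, the passage from a proper bijective birational morphism to an isomorphism requires normality of $V(p_{1},...,p_{n})$, which is not known a priori for a closure inside a Hilbert scheme; asserting it is circular, and the standard escape (Kapranov's, and the one this paper relies on in Construction \ref{kblu}) is to produce the explicit iterated blow-up model of $V$ along the spans of the $p_{i}$, whose smoothness then delivers the conclusion. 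As written, your text is an accurate roadmap of the proof with its two hardest waypoints marked ``to be done''; neither can be waved through.
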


Kapranov's construction allows to translate many issues of $\overline{M}_{0,n}$ into statements on linear systems on $\P^{n-3}$. Consider a general line $L_{i}\subset\mathbb{P}^{n-2}$ through $p_{i}$. There is a unique rational normal curve $C_{L_{i}}$ through $p_{1},...,p_{n}$, and with tangent direction $L_{i}$ in $p_{i}$. Let $[C,x_{1},...,x_{n}]\in\overline{M}_{0,n}$ be a stable curve, and let $\Gamma\in V_0(p_{1},...,p_{n})$ be the corresponding curve. Since $p_{i}\in\Gamma$ is a smooth point considering the tangent line $T_{p_{i}}\Gamma$, with some work \cite{Ka}, we get a morphism 
$$
\begin{array}{cccc}
f_i: & \overline{M}_{0,n} & \longrightarrow & \mathbb{P}^{n-3}\\
 & [C,x_{1},...,x_{n}] & \longmapsto & T_{p_{i}}\Gamma
\end{array}
$$
Furthermore, $f_{i}$ is birational and it defines an isomorphism on $M_{0,n}$. The birational maps $f_j\circ f_i^{-1}$
  \[
  \begin{tikzpicture}[xscale=1.5,yscale=-1.2]
    \node (A0_1) at (1, 0) {$\overline{M}_{0,n}$};
    \node (A1_0) at (0, 1) {$\mathbb{P}^{n-3}$};
    \node (A1_2) at (2, 1) {$\mathbb{P}^{n-3}$};
    \path (A1_0) edge [->,dashed]node [auto] {$\scriptstyle{f_{j}\circ f_{i}^{-1}}$} (A1_2);
    \path (A0_1) edge [->]node [auto] {$\scriptstyle{f_{j}}$} (A1_2);
    \path (A0_1) edge [->]node [auto,swap] {$\scriptstyle{f_{i}}$} (A1_0);
  \end{tikzpicture}
  \]
are standard Cremona transformations of $\mathbb{P}^{n-3}$ \cite[Proposition 2.12]{Ka}. For any $i = 1,...,n$ the class $\Psi_{i}$ is the line bundle on $\overline{M}_{0,n}$ whose fiber on $[C,x_{1},...,x_{n}]$ is the tangent line $T_{p_{i}}C$. From the previous description we see that the line bundle $\Psi_{i}$ induces the birational morphism $f_{i}:\overline{M}_{0,n}\rightarrow\mathbb{P}^{n-3}$, that is $\Psi_{i} = f_{i}^{*}\mathcal{O}_{\mathbb{P}^{n-3}}(1)$. In \cite{Ka} Kapranov proved that $\Psi_{i}$ is big and globally generated, and that the birational morphism $f_{i}$ is an iterated blow-up of the projections from $p_{i}$ of the points $p_{1},...,\hat{p_{i}},...p_{n}$ and of all strict transforms of the linear spaces they generate, in order of increasing dimension. 
\begin{Construction}\cite{Ka}\label{kblu}
More precisely, fix $(n-1)$-points $p_{1},...,p_{n-1}\in\mathbb{P}^{n-3}$ in linear general position.
\begin{itemize}
\item[(1)] Blow-up the points $p_{1},...,p_{n-2}$, the strict transforms of the lines spanned by two of these $n-2$ points,..., the strict transforms of the linear spaces spanned by the subsets of cardinality $n-4$ of $\{p_{1},...,p_{n-2}\}$.  
\item[(2)] Blow-up $p_{n-1}$, the strict transforms of the lines spanned by pairs of points including $p_{n-1}$ but not $p_{n-2}$,..., the strict transforms of the linear spaces spanned by the subsets of cardinality $(n-4)$ of $\{p_{1},...,p_{n-1}\}$ containing $p_{n-1}$ but not $p_{n-2}$.\\
\vdots
\item[($r$)] Blow-up the strict transforms of the linear spaces spanned by subsets of the form 
$$\{p_{n-1},p_{n-2},...,p_{n-r+1}\}$$ 
so that the order of the blow-ups in compatible by the partial order on the subsets given by inclusion.\\ 
\vdots
\item[($n-3$)] Blow-up the strict transforms of the codimension two linear space spanned by the subset $\{p_{n-1},p_{n-2},...,p_{4}\}$.
\end{itemize}
The composition of these blow-ups is the morphism $f_{n}:\overline{M}_{0,n}\rightarrow\mathbb{P}^{n-3}$ induced by the psi-class $\Psi_{n}$. Identifying $\overline{M}_{0,n}$ with $V(p_{1},...,p_{n})$, and fixing a general $(n-3)$-plane $H\subset\mathbb{P}^{n-2}$, the morphism $f_{n}$ associates to a curve $C\in V(p_{1},...,p_{n})$ the point $T_{p_{n}}C\cap H$.\\
\indent We denote by $W_{r,s}[n]$, where $s = 1,...,n-r-2$, the variety obtained at the $r$-th step once we finish blowing-up the subspaces spanned by subsets $S$ with $|S|\leq s+r-2$, and by $W_{r}[n]$ the variety produced at the $r$-th step. In particular, $W_{1,1}[n] = \mathbb{P}^{n-3}$ and $W_{n-3}[n] = \overline{M}_{0,n}$.
\end{Construction}
In \cite[Section 6.1]{Ha}, Hassett interprets the intermediate steps of Construction \ref{kblu} as moduli spaces of weighted rational curves. Consider the weight data 
$$A_{r,s}[n]:= (\underbrace{1/(n-r-1),...,1/(n-r-1)}_{(n-r-1)-\rm{times}}, s/(n-r-1), \underbrace{1,...,1}_{r-\rm{times}})$$ 
for $r = 1,...,n-3$ and $s = 1,...,n-r-2$. Then $W_{r,s}[n]\cong\overline{M}_{0,A_{r,s}[n]}$, and the Kapranov's map $f_{n}:\overline{M}_{0,n}\rightarrow\mathbb{P}^{n-3}$ factorizes as a composition of reduction morphisms
$$
\begin{array}{l}
\rho_{A_{r,s-1}[n],A_{r,s}[n]}:\overline{M}_{0,A_{r,s}[n]}\rightarrow\overline{M}_{0,A_{r,s-1}[n]},\: s = 2,...,n-r-2,\\ 
\rho_{A_{r,n-r-2}[n],A_{r+1,1}[n]}:\overline{M}_{0,A_{r+1,1}[n]}\rightarrow\overline{M}_{0,A_{r,n-r-2}[n]}.
\end{array} 
$$
\begin{Remark}\label{LM}
The Hassett's space $\overline{M}_{0,A_{1,n-3}[n]}$, that is $\mathbb{P}^{n-3}$ blown-up at all the linear spaces of codimension at least two spanned by subsets of $n-2$ points in linear general position, is the Losev-Manin's moduli space $\overline{L}_{n-2}$ introduced by \textit{A. Losev} and \textit{Y. Manin} in \cite{LM}, see \cite[Section 6.4]{Ha}. The space $\overline{L}_{n-2}$ parametrizes $(n-2)$-pointed chains of projective lines $(C,x_{0},x_{\infty},x_{1},...,x_{n-2})$ where:
\begin{itemize}
\item[-] $C$ is a chain of smooth rational curves with two fixed points $x_{0},x_{\infty}$ on the extremal components,
\item[-] $x_{1},...,x_{n-2}$ are smooth marked points different from $x_{0},x_{\infty}$ but non necessarily distinct,
\item[-] there is at least one marked point on each component.
\end{itemize}
By \cite[Theorem 2.2]{LM} there exists a smooth, separated, irreducible, proper scheme representing this moduli problem. Note that after the choice of two marked points in $\overline{M}_{0,n}$ playing the role of $x_{0},x_{\infty}$ we get a birational morphism $\overline{M}_{0,n}\rightarrow\overline{L}_{n-2}$ which is nothing but a reduction morphism.\\
For example, $\overline{L}_{1}$ is a point parametrizing a $\mathbb{P}^{1}$ with two fixed points and a free point, $\overline{L}_{2}\cong\mathbb{P}^{1}$, and $\overline{L}_{3}$ is $\mathbb{P}^{2}$ blown-up at three points in general position, that is a del Pezzo surface of degree six, see \cite[Section 6.4]{Ha} for further generalizations.
\end{Remark}

\section{Fibrations of $\overline{M}_{g,A[n]}$}\label{FIB}
This section is devoted to study fiber type morphisms of Hassett's moduli spaces. The results are based on Bruno-Mella type argument \cite{BM2} for genus zero, and \cite[Theorem 0.9]{GKM} on fibrations of $\overline{M}_{g,n}$. We start with the genus zero case. For this first we need to understand better $\overline{M}_{0,A[4]}$. 

\begin{Construction}\label{hlw} Fix a weight data $A[4]=(a_1,a_2,a_3,a_4)$ such that 
$a_1\leq a_2\leq a_3\leq a_4$, $a_1+a_2+a_3 >1$ and
$a_1+a_2+a_3+a_4\leq 2$. Under this conditions Hassett's construction can
be extended,  see also \cite[Corollary 4.7]{Ha}, and it is possible to define the  space
$\overline{M}_{0,A[4]}$.
Such a space is isomorphic to
$\overline{M}_{0,4}\cong\mathbb{P}^1$ but the boundary points have a
slightly different interpretation. Indeed the boundary point of
$\overline{M}_{0,4}$ parametrizing  curve with two components and two
marked points on each component corresponds in $\overline{M}_{0,A[4]}$
to a $\mathbb{P}^1$ with two points collapsed and other two marked
points $x_i,x_j$ which can be collapsed or not if either
$a_i+a_j\leq 1$ or $a_i+a_j >1$ respectively. The hypothesis
$a_1\leq a_2\leq a_3\leq a_4$ and $a_1+a_2+a_3 >1$ ensures that three marked points
can not collide. Finally note that as for $\overline{M}_{0,4}$ we have
exactly $\frac{1}{2}\binom{4}{2} = 3$ possible degenerations.\\ 
Fix an admissible weight data $A[n]=(a_1,\ldots,a_4,a_5,\ldots,a_n)$. Then,
under the above assumption, the reduction map
$\rho:\overline{M}_{0,[1,1,1,1,a_5,\ldots,a_n]}\to\overline{M}_{0,A[n]}$ and the forgetful
map $$\pi:\overline{M}_{0,[1,1,1,1,a_5,\ldots,a_n]}\to\overline{M}_{0,4}\iso\P^1$$
allow to define the forgetful map
$\pi_{A[n],A[4]}:\overline{M}_{0,A[n]}\to\overline{M}_{0,A[4]}\iso\P^1$. To see this let
$D_1, D_2$ be two fixed general divisors in the linear system that defines
$\pi$. Then any curve parametrized by $D_i$ can not have two pairs of markings in
$\{1,2,3,4\}$ on two distinct irreducible components otherwise it will
be mapped by $\pi$ in a boundary point of
$\overline{M}_{0,4}$. Therefore $\rho(D_i)$ does not contain the
center of any divisor contracted by $\rho$. This yields
$\rho^{-1}(\rho(D_i))=D_i$ as sets and  $\rho_*(D_1)\cap
\rho_*(D_2)=\emptyset$. In particular, the linear system $|\rho_*(D_1)|$
defines the forgetful morphism $\pi_{A[n],A[4]}$ onto $\P^1\cong\overline{M}_{0,A[4]}$.
\end{Construction}

\begin{Proposition}\label{bpfp}
Let $\overline{M}_{0,A[n]}$ be a Hassett's space admitting a dominant morphism with connected fibers $f:\overline{M}_{0,A[n]}\rightarrow\overline{M}_{0,4}\cong\mathbb{P}^{1}$. Then $f$ factors through a forgetful map. 
\end{Proposition}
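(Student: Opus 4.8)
The plan is to reduce the statement to the known case on $\overline{M}_{0,n}$ and then transport the conclusion along the reduction morphism $\rho_1$. First I would pull back $f$ along $\rho_1$ to obtain a dominant morphism with connected fibers $g := f\circ\rho_1 : \overline{M}_{0,n}\rightarrow\overline{M}_{0,4}\cong\mathbb{P}^1$. This is the crucial preliminary step, since it lets me invoke the Bruno--Mella classification of fibrations of $\overline{M}_{0,n}$ onto $\mathbb{P}^1$ from \cite{BM2}: every such fibration is (up to automorphism) one of the forgetful maps $\pi_{\{i,j,k,l\}}$ remembering four of the $n$ markings. Concretely, the fibration $g$ must coincide with a composition $\overline{M}_{0,n}\rightarrow\overline{M}_{0,4}$ forgetting all but four points. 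I would need to check that $g$ genuinely has connected fibers and is dominant, which is immediate: $\rho_1$ is birational, hence dominant with generically connected fibers, so the Stein factorizations of $g$ and $f$ agree generically, and connectedness of the fibers of $f$ forces the same for $g$.

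Next I would analyze how the forgetful map on $\overline{M}_{0,n}$ descends through $\rho_1$. The morphism $g$ being a forgetful map $\pi_{\{i,j,k,l\}}$ means it contracts exactly those boundary divisors and strata that separate the four retained markings from the rest in the appropriate way. The content of the proposition is that the particular four-point forgetful map realizing $g$ is compatible with the weight data $A[n]$, i.e. the four retained indices have weights allowing a well-defined forgetful morphism $\pi^{H}_{\ldots} : \overline{M}_{0,A[n]}\rightarrow\overline{M}_{0,A[4]}\cong\mathbb{P}^1$ on the Hassett space. Here I would use the hypothesis that $\overline{M}_{0,A[n]}$ factors Kapranov together with Lemma \ref{factkap}: that lemma guarantees that the relevant forgetful target $\overline{M}_{0,A[r]}$ inherits the Kapranov factorization, which is exactly what certifies that forgetting down to the four chosen markings is a genuine morphism and not merely a rational map.

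The core descent argument is then to show that $f$ itself factors as $\overline{M}_{0,A[n]}\rightarrow\overline{M}_{0,A[4]}$, rather than only its pullback $g$ factoring on $\overline{M}_{0,n}$. For this I would use the rigidity/universal property of the reduction morphism $\rho_1$: since $\rho_1$ is a birational contraction between normal projective varieties (normality by Remark \ref{normality}) and $g = f\circ\rho_1$ factors through the forgetful map $\pi$ on $\overline{M}_{0,n}$, it suffices to verify that the forgetful map on $\overline{M}_{0,n}$ factors through $\rho_1$, i.e. that $\rho_1$ does not contract any curve which $\pi_{\{i,j,k,l\}}$ moves. Equivalently, one checks that the induced map $\overline{M}_{0,A[n]}\map\mathbb{P}^1$ has no points of indeterminacy; since $\overline{M}_{0,A[n]}$ is normal and the indeterminacy locus of a map to a curve has codimension at least two while being covered by contracted curves, the absence of such curves upgrades the rational map to a genuine morphism, which is the desired forgetful map.

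I expect the main obstacle to be precisely this last descent step: controlling which curves are contracted by $\rho_1$ versus by the forgetful map $\pi_{\{i,j,k,l\}}$, and ensuring that the four retained markings can always be chosen among those whose weights permit a Hassett forgetful morphism. The delicate point is that on Hassett spaces not every forgetful map is defined, so one must verify the weight compatibility $2g-2+\sum_{\text{retained}} a_i>0$ together with the Kapranov-factorization condition furnished by Lemma \ref{factkap}, and rule out the pathological situation — exhibited in the Remark preceding this proposition — where the space fails to factor Kapranov and the conclusion genuinely breaks down.
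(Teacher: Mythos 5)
Your first two steps coincide with the paper's: compose with the reduction $\rho_1$ and apply the Bruno--Mella theorem \cite[Theorem 3.7]{BM2} to obtain $f\circ\rho_1=\phi\circ\pi$ with $\pi:=\pi_{i_1,\ldots,i_{n-4}}$ a forgetful map on $\overline{M}_{0,n}$. The gap is in the descent step, which is the actual content of the proposition. You correctly reduce it to showing that every curve contracted by $\rho_1$ is also contracted by $\pi$, but you never establish this: the remark that the indeterminacy locus has codimension at least two and that ``the absence of such curves upgrades the rational map to a genuine morphism'' presupposes the absence of such curves, which is precisely what must be proved, and your final paragraph concedes that this verification is still open. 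Moreover, your appeal to Lemma \ref{factkap} is backwards: that lemma takes as \emph{hypothesis} that the forgetful map is already a well-defined morphism on $\overline{M}_{0,A[n]}$ and concludes that its target factors Kapranov; it cannot be used to certify that the forgetful map is a morphism rather than merely a rational map. Checking Hassett's numerical condition on the retained weights is likewise not the point and would not by itself produce the factorization.

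What closes the gap --- and this is where the paper's route differs from yours --- is to use the morphism $f$ itself. The paper picks a Kapranov factorization $\rho_2:\overline{M}_{0,A[n]}\rightarrow\mathbb{P}^{n-3}$ under which $\pi$ corresponds to the linear projection $\tilde\pi:\mathbb{P}^{n-3}\dasharrow\mathbb{P}^{1}$ from a codimension two linear span of base points; since $f$ is an everywhere-defined morphism agreeing with $\tilde\pi\circ\rho_2$ on a dense open set, $\rho_2$ must resolve the base locus of $\tilde\pi$, so $\pi$ descends to a morphism $\pi^{H}$ on $\overline{M}_{0,A[n]}$ and $f=\phi\circ\pi^{H}$ by density of $M_{0,A[n]}$. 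In your framework the same input would finish the argument in one line: if $C$ is a curve contracted by $\rho_1$, then $\phi(\pi(C))=f(\rho_1(C))$ is a point, hence $\pi(C)$ is a point; normality of $\overline{M}_{0,A[n]}$ (Remark \ref{normality}) and the rigidity lemma then yield the factorization of $\pi$ through $\rho_1$. As written, however, your proposal stops exactly at the step that requires $f$, so it is incomplete at its crucial point.
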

\begin{proof}
Let
$f:\overline{M}_{0,A[n]}\rightarrow\overline{M}_{0,4}\cong\mathbb{P}^{1}$
be a dominant morphism and
$\rho_{1}:\overline{M}_{0,n}\rightarrow\overline{M}_{0,A[n]}$ a
reduction morphism. The composition
$f\circ\rho_{1}:\overline{M}_{0,n}\rightarrow\mathbb{P}^{1}$ is a
dominant morphism with connected fibers. By \cite[Theorem 3.7]{BM2}
$f\circ\rho_{1}$ factors through a forgetful map $\pi$. Without loss of generality we may assume $\pi :=\pi_{5,...,n}$. Then we have the following commutative diagram 
  \[
  \begin{tikzpicture}[xscale=4.0,yscale=-1.9]
    \node (A0_1) at (1, 0) {$\overline{M}_{0,n}$};
    \node (A1_1) at (1, 1) {$\overline{M}_{0,A[n]}$};
    \node (A2_0) at (0, 2) {$\overline{M}_{0,4}$};
    \node (A2_1) at (1, 2) {$\overline{M}_{0,4}\cong\mathbb{P}^{1}$};
    \path (A1_1) edge [->]node [auto] {$\scriptstyle{f}$} (A2_1);
    \path (A2_0) edge [->]node [auto] {$\scriptstyle{\phi}$} (A2_1);
   \path (A1_1) edge [->,swap]node [auto] {$\overline{\scriptstyle{f}}$} (A2_0);
    \path (A0_1) edge [->]node [auto] {$\scriptstyle{\rho_{1}}$} (A1_1);
    \path (A0_1) edge [->]node [auto,swap] {$\scriptstyle{\pi}$} (A2_0);
  \end{tikzpicture}
  \] 
where $\phi\in\Aut(\mathbb{P}^{1})$ and $\overline{f}=\f^{-1}\circ f$.  
Reorder the
weight data in such a way that $a_1\leq a_2\leq a_3\leq a_4$. Let $[D_p]$
be the point in $\overline{M}_{0,n}$ corresponding to a curves
$C_1\cup C_2$ with markings $(1,2,3)$ on $C_1\cong\mathbb{P}^1$, the
remaining on $C_2$, $p=C_1\cap C_2$, and $C_2$ have $n-4$
components. Then when the attaching point $p$ varies in $C_1$, the
points $[D_p]$ spans a curve $\Gamma\subset\overline{M}_{0,n}$ and
$\pi_{|\Gamma}$ is dominant. Since the diagram is commutative $\rho_1$
can not contract $\Gamma$, therefore $a_1+a_2+a_3>1$.\\ 
Therefore, by Construction \ref{hlw} we have a well defined Hassett's
space $\overline{M}_{0,A[4]}$ with a forgetful morphism
$\pi_{A[n],A[4]}:\overline{M}_{0,A[n]}\rightarrow\overline{M}_{0,A[4]}\iso\overline{M}_{0,4}$. Since $\overline{f}$ and $\pi_{A[n],A[4]}$ coincide on the open subset
$M_{0,A[n]}$ they are equal, and $f = \phi\circ \pi_{A[n],A[4]}$. 
\end{proof}
To extend Proposition \ref{bpfp} to a wider contest we use the following result that compares forgetful maps.

\begin{Lemma}\label{lem:forg1}
Fix a general point $x\in \overline{M}_{0,A[n]}$. Let $\pi_{I_1}:\overline{M}_{0,A[n]}\to \overline{M}_{0,A[r]}$ and $\pi_{I_2}:\overline{M}_{0,A[n]}\to \overline{M}_{0,A[r]}$ be two forgetful morphisms, and $F_{i,x}$ the corresponding fibers passing through $x$ for $i=1,2$. If $\dim (F_{1,x}\cap F_{2,x})=n-r-1$ and $r>4$ then $\pi_{I_1}$ and $\pi_{I_2}$ forget a common set of $r-1$ points.
\end{Lemma}
\begin{proof}
Let $I$ be the, possibly empty, set of common indexes. Then we may factor $\pi_{I_i}$ via  $\pi_I:\overline{M}_{0,A[n]}\rightarrow \overline{M}_{0,A[m]}$ and $\pi_{i}^\prime:\overline{M}_{0,A[m]}\rightarrow\overline{M}_{0,A[r]}$. In particular the morphisms $\pi_{1}^\prime$ and $\pi_2^\prime$ forget disjoint set of indexes.\\
Let $y=\pi_I(x)$ and  $F_{i,y}^\prime$ be the associated fibers through $y$. Assume that $m>r+1\geq 6$. Then we may factor $\pi_{i}^\prime$ with forgetful maps that forgets two disjoint pairs of indexes. This yields $\dim (F_{1,y}^\prime\cap F_{2,y}^\prime)\leq r-2$. Therefore $m=r+1$ and we conclude. 
\end{proof}
The following is the result we were looking for in the genus zero case.
\begin{Theorem}\label{facg0}Let $f:\overline{M}_{0,A[n]}\rightarrow\overline{M}_{0,B[r]}$ be a dominant morphism with connected fibers. Assume that either $r=4$ or $r>4$ and the weight data $(b_1,\ldots, b_r)$ satisfies , after reordering the indexes, $b_1\leq b_2\leq\ldots\leq b_r$, and  $b_r+b_{r-1}+b_{r-2}+b_{r-4}>2$.
Then $f$ factors, as a rational map, through a forgetful map $\pi_I:\overline{M}_{0,A[n]}\rightarrow\overline{M}_{0,A[r]}$ and a birational map $\overline{\phi}:\overline{M}_{0,A[r]}\dasharrow\overline{M}_{0,B[r]}$.
\end{Theorem}
\begin{proof}
We proceed by induction on $\dim\overline{M}_{0,B[r]}$. The first step of the induction, that is $\dim\overline{M}_{0,B[r]} = 1$, is Proposition \ref{bpfp}.\\ 
By hypothesis the forgetful  maps $\pi_{i}:\overline{M}_{0,B[r]}\rightarrow\overline{M}_{0,B[r-1]}$ forgetting the marked point $x_i$ with $i=1,2$ are well defined, and the space $\overline{M}_{0,B[r-1]}$ satisfies the induction hypothesis. The morphism $\pi_{i}\circ f$ is dominant, with connected fibers, hence by induction hypothesis it factors, as a rational map, through a forgetful morphism $\pi_{I_i}:\overline{M}_{0,A[n]}\rightarrow\overline{M}_{0,A[r-1]}$. Let $x\in \overline{M}_{0,A[n]}$ be a general point and $F_{i,x}$ the fiber of $\pi_{I_i}$ through $x$. By construction we have $F_{1,x}\cap F_{2,x}\supset f_x$, where $f_x$ is the fiber of $f$ through $x$. Therefore, by Lemma \ref{lem:forg1} the subsets $I_i$ share $n-r$ common indexes, let $I_0$ be the set of these indexes. By construction $\dim F_{\pi_{I_0}}=\dim F_f$ this shows that the general fiber of
$\pi_{I_0}$ is contracted by $f$ and the general fiber of $f$ is contracted by $\pi_{I_0}$. 
\end{proof}
Next we concentrate on higher genera. If $g\geq 1$ then all forgetful morphisms are always well defined. Therefore, the following is just a simple adaptation of \cite[Theorem 0.9]{GKM}.
\begin{Proposition}\label{gkmha}
Let $f:\overline{M}_{g,A[n]}\rightarrow X$ be a dominant morphism with connected fibers. If $g\geq 1$ either $f$ is of fiber type and factors, as a rational map, through a forgetful morphism $\pi_{I}:\overline{M}_{g,A[n]}\rightarrow\overline{M}_{g,A[r]}$ and a birational map $\overline{\phi}:\overline{M}_{g,A[r]}\dasharrow X$, or $f$ is birational and $\Exc(f)\subseteq\partial\overline{M}_{g,A[n]}$.
\end{Proposition}
\begin{proof}
By \cite[Theorem 4.1]{Ha} any Hassett's moduli space $\overline{M}_{g,A[n]}$ receives a birational reduction morphism $\rho_{n}:\overline{M}_{g,n}\rightarrow\overline{M}_{g,A[n]}$ restricting to the identity on $M_{g,n}$. The composition $f\circ\rho_{n}:\overline{M}_{g,n}\rightarrow X$ gives a fibration of $\overline{M}_{g,n}$ to a projective variety.\\ 
If $f$ is of fiber type by \cite[Theorem 0.9]{GKM} the morphism $f\circ\rho_{n}$ factors, as a rational map, through a forgetful map $\pi_{i}:\overline{M}_{g,n}\rightarrow\overline{M}_{g,i}$, with $i < n$, and a morphism $\alpha:\overline{M}_{g,i}\rightarrow X$. Considering the corresponding forgetful map $\pi_{i}^{H}:\overline{M}_{g,A[n]}\rightarrow\overline{M}_{g,A[i]}$ on the Hassett's spaces, and another birational morphism $\rho_{i}:\overline{M}_{g,i}\rightarrow\overline{M}_{g,A[i]}$ restricting to the identity on $M_{g,i}$, we get the following commutative diagram:
  \[
  \begin{tikzpicture}[xscale=2.5,yscale=-1.2]
    \node (A0_0) at (0, 0) {$\overline{M}_{g,n}$};
    \node (A0_1) at (1, 0) {$\overline{M}_{g,i}$};
    \node (A1_0) at (0, 1) {$\overline{M}_{g,A[n]}$};
    \node (A1_1) at (1, 1) {$\overline{M}_{g,A[i]}$};
    \node (A2_2) at (2, 2) {$X$};
    \path (A0_0) edge [->]node [auto] {$\scriptstyle{\pi_{i}}$} (A0_1);
    \path (A1_0) edge [->]node [auto] {$\scriptstyle{\pi_{i}^{H}}$} (A1_1);
    \path (A1_0) edge [->,bend left=20,swap]node [auto] {$\scriptstyle{f}$} (A2_2);
    \path (A1_1) edge [->,dashed]node [auto] {$\scriptstyle{\overline{\phi}}$} (A2_2);
    \path (A0_0) edge [->,swap]node [auto] {$\scriptstyle{\rho_{n}}$} (A1_0);
    \path (A0_1) edge [->]node [auto] {$\scriptstyle{\rho_{i}}$} (A1_1);
    \path (A0_1) edge [->,bend right=20]node [auto] {$\scriptstyle{\alpha}$} (A2_2);
  \end{tikzpicture}
  \]
Note that $\rho_{i}\circ\pi_{i}$ and $\pi_{i}^{H}\circ\rho_{n}$ are defined on $\overline{M}_{g,n}$ and coincide on $M_{g,n}$. Since $\overline{M}_{g,n}$ is separated we have $\rho_{i}\circ\pi_{i} = \pi_{i}^{H}\circ\rho_{n}$. Let $\overline{\phi}:\overline{M}_{g,A[i]}\dasharrow X$ be the birational map induced by $\alpha$. Clearly $\overline{\phi}\circ\pi_{i}^{H} = f$.\\
Now, assume that $f$ is birational. If $\Exc(f) \cap M_{g,A[n]}\neq\emptyset$ then $\Exc(f\circ\rho_{n})\cap M_{g,n}\neq\emptyset$. This contradicts \cite[Theorem 0.9]{GKM}. So $\Exc(f)\subseteq\partial\overline{M}_{g,A[n]}$.\\ 
Let us consider the case $g = 1$. If $f$ is of fiber type, by the second part of \cite[Theorem 0.9]{GKM}, the fibration $f\circ\rho_{n}$ factors, as a rational map, through $\pi_{I}\times\pi_{I^{c}}$. Let $\overline{\phi}:\overline{M}_{1,A[i]}\times_{\overline{M}_{1,A[m]}}\overline{M}_{1,A[n-i]}\dasharrow X$ the birational map induced by $\alpha$. Therefore, we have the following commutative diagram
 \[
  \begin{tikzpicture}[xscale=4.5,yscale=-1.2]
    \node (A0_0) at (0, 0) {$\overline{M}_{1,n}$};
    \node (A0_1) at (1, 0) {$\overline{M}_{1,S}\times_{\overline{M}_{1,m}}\overline{M}_{1,S^{c}}$};
    \node (A1_0) at (0, 1) {$\overline{M}_{1,A[n]}$};
    \node (A1_1) at (1, 1) {$\overline{M}_{1,A[i]}\times_{\overline{M}_{1,A[m]}}\overline{M}_{1,A[n-i]}$};
    \node (A2_2) at (2, 2) {$X$};
    \path (A0_0) edge [->]node [auto] {$\scriptstyle{\pi_{I}\times\pi_{I^{c}}}$} (A0_1);
    \path (A1_0) edge [->]node [auto] {$\scriptstyle{\pi_{I}^{H}\times\pi_{I^{c}}^{H}}$} (A1_1);
    \path (A1_0) edge [->,bend left=20,swap]node [auto] {$\scriptstyle{f}$} (A2_2);
    \path (A1_1) edge [->,dashed]node [auto] {$\scriptstyle{\overline{\phi}}$} (A2_2);
    \path (A0_0) edge [->,swap]node [auto] {$\scriptstyle{\rho_{n}}$} (A1_0);
    \path (A0_1) edge [->]node [auto] {$\scriptstyle{\rho_{i}\times\rho_{i^{c}}}$} (A1_1);
    \path (A0_1) edge [->,bend right=20]node [auto] {$\scriptstyle{\alpha}$} (A2_2);
  \end{tikzpicture}
  \]
We may assume that the forgetful morphisms $\overline{M}_{1,A[i]}\rightarrow\overline{M}_{1,A[m]}$, $\overline{M}_{1,A[n-i]}\rightarrow\overline{M}_{1,A[m]}$ realizing the fiber product forget respectively the first $i-m$ and $(n-i)-m$ marked points. Therefore, we can identify a point in $\overline{M}_{1,A[i]}\times_{\overline{M}_{1,A[m]}}\overline{M}_{1,A[n-i]}$ with a pair of the form: 
$$([C,x_1,...,x_i],[C,x_1,...,x_m,x_{m+1},...,x_i,x_{m+i+1},...,x_{n}]).$$ 
Now, we have a birational map
$$
\begin{array}{cccc}
\tilde{\phi}: & \overline{M}_{1,A[i]}\times_{\overline{M}_{1,A[m]}}\overline{M}_{1,A[n-i]} & \longrightarrow & \overline{M}_{1,A[n-m]}\\
 & ([C,x_1,...,x_i],[C,x_1,...,x_m,x_{m+1},...,x_i,x_{m+i+1},...,x_{n}]) & \longmapsto & [C,x_1,...,x_i,x_{m+i+1},...,x_{n}]
\end{array}
$$
Let $\pi_{I}:\overline{M}_{1,A[n]}\rightarrow\overline{M}_{1,A[n-m]}$ be the forgetful morphism forgetting the marked points indexed by $I = \{i+1,...,i+m\}$. Then $f$ factors, as a rational map, as $\overline{\phi}\circ\tilde{\phi}^{-1}\circ\pi_{I}$.\\
If $f$ is birational and $\Exc(f)\cap M_{1,A[n]}\neq\emptyset$ then $\Exc(f\circ\rho_{n})\cap M_{1,n}\neq\emptyset$. Again this contradicts the second part of \cite[Theorem 0.9]{GKM}. So $\Exc(f)\subseteq\partial\overline{M}_{1,A[n]}$.
\end{proof}

\section{Automorphisms of $\overline{M}_{g,A[n]}$}\label{AUT}
Let $\f:\overline{M}_{g,A[n]}\to\overline{M}_{g,A[n]}$ be an automorphism and $\pi_i:\overline{M}_{g,A[n]}\to\overline{M}_{g,A[n-1]}$ a forgetful morphism. 
Consider the composition $\pi_i\circ\phi^{-1}$ and assume we know that $\pi_i\circ\phi^{-1}$ factors via a forgetful map $\pi_{j_i}$. Then this produces the following commutative diagram
$$
  \begin{tikzpicture}[xscale=3.5,yscale=-1.2]
    \node (A0_0) at (0, 0) {$\overline{M}_{g,A_{[n]}}$};
   \node (A0_1) at (1, 0) {$\overline{M}_{g,A_{[n]}}$};
    \node (A1_0) at (0, 1) {$\overline{M}_{g,A_{[n-1]}}$};
    \node (A1_1) at (1, 1) {$\overline{M}_{g,A_{[n-1]}}$};
    \path (A0_0) edge [->]node [auto] {$\scriptstyle{\phi^{-1}}$} (A0_1); 
\path (A0_1) edge [->]node [auto] {$\scriptstyle{\pi_i}$} (A1_1);
 \path (A0_0) edge [->]node [auto,swap] {$\scriptstyle{\pi_{j_i}}$} (A1_0);
 \path (A1_0) edge [->,dashed]node [auto] {$\scriptstyle{\tilde{\phi}}$} (A1_1);
\end{tikzpicture}
$$
In particular this associates to $\phi$ the transposition $i\leftrightarrow j_i$ and, if we may apply this argument to $m$ forgetful maps, it produces a morphism
\begin{equation}\label{eq:chi}
\chi_{g,A[n]}:\Aut(\overline{M}_{g,A[n]})\rightarrow S_m. 
\end{equation}
This argument, together with Theorem \ref{facg0}, and Proposition \ref{gkmha} gives rise to meaningful morphisms $\chi_{g,A[n]}$  for a vast class of Hassett's spaces. The aim of this section is to determine the automorphism group of many Hassett's spaces studying image and kernel of these morphisms. 

We start considering some Hassett's spaces of Construction \ref{kblu}.\\
Assume $r = 1$ and $s = n-3$. Then the weight data is
$$A_{1,n-3}[n]:= (\underbrace{1/(n-2),...,1/(n-2)}_{(n-2)-\rm{times}}, (n-3)/(n-2), 1).$$ 
Therefore, on $\overline{M}_{0,A_{1,n-3}[n]}$ the forgetful map $\pi_i$ is well defined if and only if $i\leq n-2$. In particular, after any such forgetful map the image satisfies the hypothesis of Theorem \ref{facg0}. This yields the morphism
\begin{equation}\label{mor1}
\chi_{1,n-3}:=\chi_{0,A_{1,n-3}[n]}:\Aut(\overline{M}_{0,A_{1,n-3}[n]})\rightarrow S_{n-2}.
\end{equation}
If $r\geq 2$ then the weight data is 
\begin{equation}\label{wdr2}
A_{r,s}[n]:= (\underbrace{1/(n-r-1),...,1/(n-r-1)}_{(n-r-1)-\rm{times}}, s/(n-r-1), \underbrace{1,...,1}_{r- \rm{times}})
\end{equation}
and on $\overline{M}_{0,A_{r,s}[n]}$ any forgetful map is well defined. If $r = 2$ and $s\leq n-5$ the target space of the first $n-2$ forgetful maps satisfies the hypothesis of Theorem \ref{facg0}. This yields the morphism 
\begin{equation}\label{mor2}
\chi_{2,s}:=\chi_{0,A_{2,s}[n]}:\Aut(\overline{M}_{0,A_{2,s}[n]})\rightarrow S_{n-2}
\end{equation}
If either $r = 2$ and $s = n-4$, or $r\geq 3$ after any forgetful map the target space satisfies the hypothesis of Theorem \ref{facg0}. This yields the morphism 
\begin{equation}\label{mor3}
\chi_{r,s}:=\chi_{0,A_{r,s}[n]}:\Aut(\overline{M}_{0,A_{r,s}[n]})\rightarrow S_{n}
\end{equation}
Now, we consider curves of positive genus. Observe that $\overline{M}_{1,A[1]}\cong\overline{M}_{1,1}\cong\mathbb{P}^{1}$ for any weight data. Therefore we may restrict to the cases $g = 1, n\geq 2$ and $g\geq 2, n\geq 1$.
\begin{Lemma}\label{efm}
If $g = 1, n\geq 2$ or $g\geq 2, n\geq 1$ then all the forgetful morphisms $\overline{M}_{g,A[n]}\rightarrow\overline{M}_{g,A[n-1]}$ are well defined. 
\end{Lemma}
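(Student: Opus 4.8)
The plan is to reduce the lemma to the numerical criterion of Hassett that governs when a forgetful operation extends to a morphism of the compactified spaces, and then to verify that criterion directly in each of the two prescribed ranges. Recall from \cite[Theorem 4.3]{Ha} — already invoked in the genus zero discussion above to show that $\pi_n$ and $\pi_{n-1}$ fail to be morphisms on $\overline{M}_{0,A_{1,s}[n]}$ — that the forgetful map $\pi_i:\overline{M}_{g,A[n]}\rightarrow\overline{M}_{g,A[n-1]}$, dropping the $i$-th marking and retaining the remaining ones with their weights, is a well defined morphism precisely when the retained weight data still satisfies the stability inequality
$$2g-2+\sum_{j\neq i}a_j>0.$$
Thus the entire statement amounts to checking this inequality for every index $i$ in the two ranges $g=1,\,n\geq 2$ and $g\geq 2,\,n\geq 1$.

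First I would dispose of the case $g\geq 2$, $n\geq 1$. Here $2g-2\geq 2>0$, while the retained markings contribute a nonnegative amount $\sum_{j\neq i}a_j\geq 0$ (the sum being empty exactly when $n=1$, where the target is $\overline{M}_g$ and the inequality is just $2g-2>0$). Hence the inequality holds for every $i$ with room to spare, and each $\pi_i$ is a morphism. Next I would treat $g=1$, $n\geq 2$. Now $2g-2=0$, so the criterion collapses to $\sum_{j\neq i}a_j>0$; since $n\geq 2$, for each $i$ there is at least one retained index $j\neq i$ with $a_j>0$ by the very definition of the weight data, so the sum is strictly positive and again every $\pi_i$ is a morphism.

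There is essentially no serious obstacle here: the content lies entirely in correctly invoking \cite[Theorem 4.3]{Ha}, and the only point requiring a little care is the contrast with genus zero. It is precisely the negative term $2g-2=-2$ in genus zero that can push the retained total below the stability threshold, as happens when one forgets the two heaviest markings of $\overline{M}_{0,A_{1,s}[n]}$; whereas for $g=1$ the borderline value $2g-2=0$ is rescued by the strict positivity of any single retained weight, and for $g\geq 2$ the term $2g-2$ already suffices on its own. The hypotheses $g=1,\,n\geq 2$ and $g\geq 2,\,n\geq 1$ are thus exactly the conditions ensuring that no forgetful map ever falls on the wrong side of Hassett's inequality.
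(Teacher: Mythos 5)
Your proof is correct and matches the paper's own argument exactly: both reduce the statement to the stability inequality $2g-2+\sum_{j\neq i}a_j>0$ from \cite[Theorem 4.3]{Ha} and verify it by observing that $2g-2\geq 2$ when $g\geq 2$, while for $g=1$ the presence of at least one retained positive weight suffices. Your additional remarks contrasting with the genus zero case are accurate but not needed for the proof.
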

\begin{proof}
If $g = 1$ then $2g-2+a_{1}+...+a_{n-1} = a_{1}+...+a_{n-1}>0$ being $n\geq 2$. If $g = 2$ we have $2g-2+a_{1}+...+a_{n-1}\geq 2 + a_{1}+...+a_{n-1}>0$ for any $n\geq 1$. To conclude it is enough to apply \cite[Theorem 4.3]{Ha}.   
\end{proof}
Then Proposition \ref{gkmha}, and Lemma \ref{efm} yield the morphism
\begin{equation}\label{mor4}
\chi_{g,A[n]}: \Aut(\overline{M}_{g,A[n]})\rightarrow S_{n}
\end{equation}

Next we have to determine the image of the morphisms $\chi_{g,A[n]}$. Let us start with an example.

\begin{Example}
In $\overline{M}_{2,A[4]}$ with weights $(1,1/3,1/3,1/3)$ consider the divisor parametrizing reducible curves $C_{1}\cup C_{2}$, where $C_{1}$ has genus zero and markings $(1,1/3,1/3)$, and $C_{2}$ has genus two and marking $1/3$.
$$\includegraphics[scale=0.4]{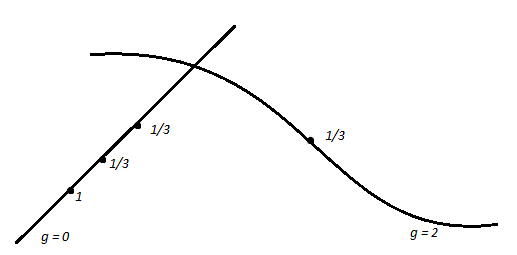}$$
After the transposition $1\leftrightarrow 4$ the genus zero component has markings $(1/3,1/3,1/3)$, so it is contracted. This means that the transposition induces a birational map
 \[
  \begin{tikzpicture}[xscale=2.5,yscale=-1.2]
    \node (A0_0) at (0, 0) {$\overline{M}_{2,A[4]}$};
    \node (A0_1) at (1, 0) {$\overline{M}_{2,A[4]}$};
    \path (A0_0) edge [->,dashed]node [auto] {$\scriptstyle{1\leftrightarrow 4}$} (A0_1);
  \end{tikzpicture}
  \]
contracting a divisor on a codimension two subscheme of $\overline{M}_{2,A[4]}$. Consider the locus of smooth genus two curves $C$ with four marked points such that $x_{2} = x_{3} = x_{4}\in C$. Since $a_{1}+a_{2}+a_{3}>1$ the birational map induced by $1\leftrightarrow 4$ is not defined on such locus.
\end{Example}
This example suggests that troubles come from rational tails with at least three marked points, and leads us to the following definition.
\begin{Definition}\label{atrans}
A transposition $i\leftrightarrow j$ of two marked points is \textit{admissible} if and only if for any $h_{1},...,h_{r}\in\{1,...,n\}$, with $r\geq 2$,
$$a_{i}+\sum_{k = 1}^{r}a_{h_{k}}\leq 1 \iff a_{j}+\sum_{k = 1}^{r}a_{h_{k}}\leq 1.$$
\end{Definition}

\begin{Lemma}\label{lsal}
Let $\sigma\in S_n$ be a permutation inducing an automorphism of $\overline{M}_{g,A[n]}$. Assume that if $i\neq\sigma(i)$ the forgetful morphisms $\pi_i$ and $\pi_{\sigma(i)}$ are well defined. Let  $A_i[n-1] = (a_1,...,\hat{a}_{i},...,n)$ and $A_{\sigma(i)}[n-1] = (a_{\sigma(1)},...,\hat{a}_{\sigma(i)},...,a_{\sigma(n)})$ be weight data, then $\sigma$ induces an isomorphism
 $$\overline{M}_{g,A_i[n-1]}\cong \overline{M}_{g,A_{\sigma(i)}[n-1]},$$ 
whenever $\overline{M}_{g,A_i[n-1]}$ is well defined.
\end{Lemma}
\begin{proof}
If $i=\sigma(i)$ there is nothing to prove. Assume that $i\neq\sigma(i)$, then we have the following commutative diagram
  \[
  \begin{tikzpicture}[xscale=3.5,yscale=-1.2]
    \node (A0_0) at (0, 0) {$\overline{M}_{g,A[n]}$};
    \node (A0_1) at (1, 0) {$\overline{M}_{g,A[n]}$};
    \node (A1_0) at (0, 1) {$\overline{M}_{g,A_i[n-1]}$};
    \node (A1_1) at (1, 1) {$\overline{M}_{g,A_{\sigma(i)}[n-1]}$};
    \path (A0_0) edge [->]node [auto] {$\scriptstyle{\sigma}$} (A0_1);
    \path (A0_0) edge [->,swap]node [auto] {$\scriptstyle{\pi_i}$} (A1_0);
    \path (A0_1) edge [->]node [auto] {$\scriptstyle{\pi_{\sigma(i)}}$} (A1_1);
    \path (A1_0) edge [->,dashed]node [auto] {$\scriptstyle{\tilde{\sigma}}$} (A1_1);
  \end{tikzpicture}
  \]
Where, $\tilde{\sigma}\in S_{n-1}$ is the permutation induced by $\sigma$. Let us assume that the indeterminacy locus of $\tilde{\sigma}$ is not empty. This means that there are $\{j_1,...,j_r\}\subseteq \{1,...,\hat{i},...,n\}$ with $r\geq 3$ such that $a_{j_1}+...+a_{j_r}\leq 1$ but $a_{\sigma(j_1)}+...+a_{\sigma(j_r)} > 1$. In particular, there is a subvariety $Z\subset \overline{M}_{g,A_i[n-1]}$ whose general point corresponds to an irreducible curve $C$ of genus $g$ with $x_{j_1}=...=x_{j_r}$ and other marked points.

Let $w\in\pi_i^{-1}(Z)$ be a general point parametrizing a curve $C$ with marked points $x_{j_1}=...=x_{j_r}$, $x_i$ and eventually other marked points. By hypothesis $a_{\sigma(j_1)}+...+a_{\sigma(j_r)} > 1$, hence we derive the contradiction that  the morphism $\sigma$ is not defined on $w$. By the same argument applied to $\sigma^{-1}$ we conclude that  $\tilde{\sigma}$ is an isomorphism.

\end{proof}

\begin{Remark}\label{autfl}
If $\phi:X\rightarrow X$ is an automorphism of a scheme then the fixed points of $\phi$ form a closed subscheme. So once the fixed locus includes a dense set $U\subseteq X$, the fixed locus is the entire space, and $\phi$ is the identity.
\end{Remark}

\begin{Proposition}\label{k2015}
Let $\overline{M}_{g,A[n]}$ be a Hassett's space such that $2g-2+n\geq 3$. Assume that  in Equation~(\ref{eq:chi}) we have $m = n$. Then $\ker(\chi_{g,A[n]})$ is trivial.\\ Furthermore, if $\sigma_{\phi} = \chi_{g,A[n]}(\phi)$ is a permutation induced by an automorphism $\phi\in \Aut(\overline{M}_{g,A[n]})$, then $\sigma_{\phi}$ induces an automorphism of $\overline{M}_{g,A[n]}$, and $\phi$ acts on $\overline{M}_{g,A[n]}$ as the permutation $\sigma_{\phi}$. 
\end{Proposition}
\begin{proof}
By Remark \ref{n<3} and \cite{Ma} we may assume $n\geq 3$. Let $\phi\in\Aut(\overline{M}_{g,A[n]})$ be an automorphism such that $\chi_{g,A[n]}(\phi)$ is the identity, that is for any $i = 1,...,n$ the fibration $\pi_{i}\circ\phi^{-1}$, and the fibration $\pi_{i}\circ\phi$ as well, factor through $\pi_{i}$ and we have $n$ commutative diagrams
  \[
  \begin{tikzpicture}[xscale=2.7,yscale=-1.2]
    \node (A0_0) at (0, 0) {$\overline{M}_{g,A[n]}$};
    \node (A0_1) at (1, 0) {$\overline{M}_{g,A[n]}$};
    \node (A1_0) at (0, 1) {$\overline{M}_{g,A_1[n-1]}$};
    \node (A1_1) at (1, 1) {$\overline{M}_{g,A_1[n-1]}$};
    \path (A0_0) edge [->]node [auto] {$\scriptstyle{\phi}$} (A0_1);
    \path (A1_0) edge [->,dashed]node [auto] {$\scriptstyle{\tilde{\phi}_{1}}$} (A1_1);
    \path (A0_1) edge [->]node [auto] {$\scriptstyle{\pi_{1}}$} (A1_1);
    \path (A0_0) edge [->,swap]node [auto] {$\scriptstyle{\pi_{1}}$} (A1_0);
  \end{tikzpicture}
  \quad
  \cdots
  \quad
  \begin{tikzpicture}[xscale=2.7,yscale=-1.2]
    \node (A0_0) at (0, 0) {$\overline{M}_{g,A[n]}$};
    \node (A0_1) at (1, 0) {$\overline{M}_{g,A[n]}$};
    \node (A1_0) at (0, 1) {$\overline{M}_{g,A_n[n-1]}$};
    \node (A1_1) at (1, 1) {$\overline{M}_{g,A_n[n-1]}$};
    \path (A0_0) edge [->]node [auto] {$\scriptstyle{\phi}$} (A0_1);
    \path (A1_0) edge [->,dashed]node [auto] {$\scriptstyle{\tilde{\phi}_{n}}$} (A1_1);
    \path (A0_1) edge [->]node [auto] {$\scriptstyle{\pi_{n}}$} (A1_1);
    \path (A0_0) edge [->,swap]node [auto] {$\scriptstyle{\pi_{n}}$} (A1_0);
  \end{tikzpicture}
  \]
Let $[C,x_1,...,x_n]\in \overline{M}_{g,A[n]}$ be a general point, and let $[\Gamma,y_1,...,y_n] = \phi([C,x_1,...,x_n])$ be its image via $\phi$. Then $\tilde{\phi}_i([C,x_1,...,\hat{x}_i,...,x_n]) = [\Gamma,y_1,...,\hat{y}_i,...,y_n]$ for any $i = 1,...,n$. Now let $F_{1,x} = \pi_1^{-1}(\pi_1([C,x_1,...,x_n]))$ and $F_{1,y} = \pi_1^{-1}(\pi_1([\Gamma,y_1,...,y_n]))$. Therefore $\phi$ induces an isomorphism $\phi_{|F_{1,x}}:F_{1,x}\rightarrow F_{1,y}$.\\
Let us consider the fiber $F_{1,x}$. Since $2g-2+n\geq 3$ the general $(n-1)$-pointed genus $g$ curve is automorphism-free. Therefore $F_{1,x}$ is isomorphic to $C$, and on this fiber we have $n-1$ special points, which we still denote by $x_2,...,x_n$, corresponding to curves where $x_1$ and $x_i$ collide for $i = 2,...,n$. Clearly, we have the same picture for $F_{1,y}$ and we denote by $y_2,...,y_n$ the corresponding special points.\\
Now we consider $\phi(x_2)$, and distinguish two cases:
\begin{itemize}
\item[(\textit{a})] Assume that $\phi(x_2)\in F_{1,y}\setminus\{y_2,...,y_n\}$. Then $\phi(x_2) = [\Gamma,z_1,y_2,...,y_n]$ is the class of a smooth curve with $n$ distinct marked points. Then $\pi_3(\phi_{|F_{1,x}}(x_2)) = [\Gamma,z_1,y_2,y_4,...,y_n]$ and since there are no two collapsed points on $\Gamma$ and $(\Gamma,z_1,y_2,y_4,...,y_n)$ is automorphism-free we have $\pi_3^{-1}([\Gamma,z_1,y_2,y_4,...,y_n])\cong \Gamma$.\\
On the other hand $\pi_3\circ\phi$ factors through $\pi_3$. We know that $\pi_3(x_2)$ is the class of a curve $[C^{'},x_1,x_2,x_4,...,x_n]$, where either $x_1$ and $x_2$ are collapsed and $C^{'}\cong C$ is irreducible or $x_1\neq x_2$ and $C^{'}\cong C\cup \mathbb{P}^1$ with $x_1,x_2$ on the rational tail, depending on whether $a_1+a_2\leq 1$ or $a_1+a_2> 1$. We have the following commutative diagram
\[
  \begin{tikzpicture}[xscale=2.7,yscale=-1.2]
    \node (A0_0) at (0, 0) {$\overline{M}_{g,n}$};
    \node (A0_1) at (1, 0) {$\overline{M}_{g,A[n]}$};
    \node (A1_0) at (0, 1) {$\overline{M}_{g,n-1}$};
    \node (A1_1) at (1, 1) {$\overline{M}_{g,A_3[n-1]}$};
    \path (A0_0) edge [->]node [auto] {$\scriptstyle{\rho}$} (A0_1);
    \path (A1_0) edge [->]node [auto] {$\scriptstyle{\tilde{\rho}}$} (A1_1);
    \path (A0_1) edge [->]node [auto] {$\scriptstyle{\pi_{3}}$} (A1_1);
    \path (A0_0) edge [->,swap]node [auto] {$\scriptstyle{\tilde{\pi}_{3}}$} (A1_0);
  \end{tikzpicture} 
\]
where $\rho$ and $\tilde{\rho}$ are reduction morphisms. Note that even when $a_1+a_2\leq 1$ since just two points are collapsed we have $\tilde{\rho}^{-1}([C^{'},x_1,x_2,x_4,...,x_n]) = [C\cup \mathbb{P}^1,x_1,x_2,x_4,...,x_n]$ and $\tilde{\rho}$ is an isomorphism in a neighborhood of $[C\cup \mathbb{P}^1,x_1,x_2,x_4,...,x_n]$. Therefore $\rho$ defines an isomorphism between $\tilde{\pi}_3^{-1}([C\cup \mathbb{P}^1,x_1,x_2,x_4,...,x_n])\cong C\cup \mathbb{P}^1$ and $\pi_3^{-1}([C^{'},x_1,x_2,x_4,...,x_n])$.\\
The point $\pi_3(x_2)$ is a general point on the divisor $\Delta_{0,\{1,2\}}\subset\overline{M}_{g,A_3[n-1]}$. Therefore the rational map $\tilde{\phi}_3$ is well defined on $\pi_3(x_2)$, and $\phi$ defines an isomorphism between $\pi_3^{-1}(\pi_3(x_2))\cong C\cup \mathbb{P}^1$ and $\pi_3^{-1}([\Gamma,z_1,y_2,y_4,...,y_n])\cong \Gamma$ which is irreducible. A contradiction.
\item[(\textit{b})] Assume that $\phi(x_2)=y_r$ for some $r\in\{3,...,n\}$. Recall that $y_r$ represents the class of a curve $[\Gamma^{'},y_1,...,y_n]$, where either $y_1$ and $y_r$ are collapsed and $\Gamma^{'}\cong \Gamma$ is irreducible or $y_1\neq y_r$ and $\Gamma^{'}\cong \Gamma\cup \mathbb{P}^1$ with $y_1,y_r$ on the rational tail, again depending on whether $a_1+a_r\leq 1$ or $a_1+a_r> 1$. In both cases $\pi_r(y_r)$ is the class $[\Gamma,y_1,y_2,...,\hat{y}_r,...,y_n]$. Now, to get a contradiction it is enough to argue as in point $(a)$ by observing that $\phi$ induces an isomorphism between the reducible curve $\pi_r^{-1}(\pi_r(x_2))$ and the irreducible curve $\pi_r^{-1}(\pi_r(y_r))$.
\end{itemize}
We conclude that $\phi(x_2) = y_2$. Furthermore, the same arguments shows that $\phi(x_i) = y_i$ for any $i=2,...,n$. This means that $\phi_{|F_{1,x}}:F_{1,x}\cong C\rightarrow F_{1,y}\cong \Gamma$ yields an isomorphisms between $C$ and $\Gamma$ mapping $x_i$ to $y_i$ for $i = 2,...,n$. Therefore, $[C,x_2,...,x_n] = [\Gamma,y_2,...,y_n]$ and $\phi$ restricts to an automorphism of the general fiber of $\pi_1$ fixing $n-1$ points. Our numerical hypothesis forces $\phi$ to be the identity on the general fiber of $\pi_1$, and by Remark \ref{autfl} we conclude that $\phi$ is the identity on $\overline{M}_{g,A[n]}$.\\
Now let $\phi\in \Aut(\overline{M}_{g,A[n]})$ and $\sigma_{\phi} = \chi_{g,A[n]}(\phi)$. In the same notation of the previous part of the proof we have that $\phi$ maps $F_{i,x}$ to $F_{\sigma_{\phi}(i),y}$. A priori the permutation $\sigma_{\phi}^{-1}$ induces just a birational automorphism of $\Aut(\overline{M}_{g,A[n]})$. Nevertheless, $\sigma_{\phi}^{-1}\circ \phi$ restricts to a birational automorphism of $F_{1,x}$ fixing $n-1$ points. Now, since $F_{1,x}$ is a smooth curve, the map $\sigma_{\phi}^{-1}\circ \phi$ restricts to an automorphism of $F_{1,x}$ fixing $n-1$ points. Then the birational map $\sigma_{\phi}^{-1}\circ \phi$ restricts to the identity in the general fiber of $\pi_1$. Therefore, $\sigma_{\phi}^{-1}\circ \phi$ is the identity on the open subset of $U\subseteq \overline{M}_{g,A[n]}$ where $\sigma_{\phi}^{-1}$ is defined. We conclude that $\phi$ acts as $\sigma_{\phi}$ on $U$.
Now, the permutation $\sigma_{\phi}$ induces an automorphism of $\overline{M}_{g,n}$. Let us consider the following diagram 
  \[
  \begin{tikzpicture}[xscale=2.7,yscale=-1.2]
    \node (A0_0) at (0, 0) {$\overline{M}_{g,n}$};
    \node (A0_1) at (1, 0) {$\overline{M}_{g,n}$};
    \node (A1_0) at (0, 1) {$\overline{M}_{g,A[n]}$};
    \node (A1_1) at (1, 1) {$\overline{M}_{g,A[n]}$};
    \path (A0_0) edge [->]node [auto] {$\scriptstyle{\sigma_{\phi}}$} (A0_1);
    \path (A1_0) edge [->]node [auto] {$\scriptstyle{\phi}$} (A1_1);
    \path (A0_1) edge [->]node [auto] {$\scriptstyle{\rho}$} (A1_1);
    \path (A0_0) edge [->,swap]node [auto] {$\scriptstyle{\rho}$} (A1_0);
  \end{tikzpicture}
  \]
where $\rho$ is the reduction morphism. A priori this diagram is commutative on the open subset $V = \rho^{-1}(U) \subseteq \overline{M}_{g,n}$. However, since $\rho\circ \sigma_{\phi}$ and $\phi\circ \rho$ are both morphisms coinciding on an open subset we conclude that the diagram is indeed commutative. Therefore $\phi$ must act as the permutation $\sigma_{\phi}$ on the whole of $\overline{M}_{g,A[n]}$, and in turns $\sigma_{\phi}$ induces an automorphism of $\overline{M}_{g,A[n]}$.
\end{proof}

We state the following lemma for the reader's convenience.
\begin{Lemma}\label{s2lemma}
Let $\phi:X\rightarrow Y$ be a continuous map of separated schemes defining a morphism in codimension at least two. If $X$ is $S_{2}$ then $\phi$ is a morphism.
\end{Lemma}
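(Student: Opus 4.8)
The plan is to show that the morphism structure on the good open set extends across the bad locus, working Zariski-locally on the source and using the $S_2$ hypothesis as a Hartogs-type extension principle for regular functions. Write $U\subseteq X$ for the open subscheme on which $\phi$ is a morphism of schemes, let $Z:=X\setminus U$ (of codimension at least two), and let $j\colon U\hookrightarrow X$ be the inclusion. The morphism $\phi|_U$ supplies a map of sheaves of rings $\O_Y\to (\phi|_U)_*\O_U=\phi_*j_*\O_U$. The engine of the proof is the fact that for an $S_2$ scheme $X$ and a closed subset of codimension at least two the canonical map $\O_X\xrightarrow{\sim} j_*\O_U$ is an isomorphism; granting this, the above becomes a map $\alpha\colon\O_Y\to\phi_*\O_X$, so that $(\phi,\alpha)$ is a morphism of ringed spaces extending $\phi|_U$. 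Being a scheme morphism is then equivalent to the single remaining condition that the stalk maps $\O_{Y,\phi(x)}\to\O_{X,x}$ be local at every $x\in Z$; on $U$ this is automatic, and on overlaps everything is forced to agree with $\phi|_U$, so the local pieces glue with no further work.

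For the extension fact I would argue affine-locally: if $W=\Spec A$ is an affine open with $A$ Noetherian and $S_2$ and $Z\cap W$ of codimension at least two, the local cohomology sequence $0\to H^0_Z(A)\to A\to\O_X(W\cap U)\to H^1_Z(A)\to 0$ together with $\operatorname{depth}_Z A\geq 2$ — which is exactly what $S_2$ plus codimension two delivers — forces $H^0_Z(A)=H^1_Z(A)=0$, hence $A=\O_X(W)\xrightarrow{\sim}\O_X(W\cap U)$. Concretely, after choosing (by continuity of $\phi$) affine opens $x\in W=\Spec A$ and $\phi(x)\in V=\Spec B$ with $\phi(W)\subseteq V$, the morphism $\phi|_{W\cap U}$ corresponds to $\theta_0\colon B\to\O_X(W\cap U)$, and composing with the inverse of this isomorphism gives a genuine ring homomorphism $\theta\colon B\to A$, that is, a scheme morphism $\psi\colon W\to V$ with $\psi|_{W\cap U}=\phi|_{W\cap U}$.

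The delicate point is to check that the underlying continuous map of $\psi$ really is $\phi$ on all of $W$, i.e. locality of the stalk maps. One cannot simply invoke separatedness of $Y$: the usual diagonal/equaliser argument needs a morphism into $Y\times Y$, and a map that is only topologically continuous need not be continuous for the finer Zariski topology of the product. Instead I would argue by specialisation. For each $b\in B$ the open sets $\phi^{-1}(D(b))$ and $\psi^{-1}(D(b))=D(\theta(b))$ agree on the dense open $W\cap U$; if $\theta(b)\in\mathfrak p_x$, then by Krull's principal ideal theorem $x$ has a generisation $\eta$ of codimension at most one lying in $V(\theta(b))$, which is therefore in $W\cap U$ and satisfies $\eta\notin\phi^{-1}(D(b))$, whence $x\notin\phi^{-1}(D(b))$ because open sets are stable under generisation. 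This yields the inclusion of primes $\psi(x)=\theta^{-1}(\mathfrak p_x)\subseteq\phi(x)$ for every $x$. Since everything is of finite type over the algebraically closed base field $k$, every closed point has residue field $k$, so $\psi(x)$ is a maximal ideal there and the inclusion forces $\phi(x)=\psi(x)$ on closed points; as $X$ and $Y$ are Jacobson and sober, agreement on the dense set of closed points propagates to all points, giving $\phi|_W=\psi$.

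The main obstacle is exactly this last matching step: the extension of functions is routine once $S_2$ is available, but one must verify that the extended scheme morphism carries the prescribed continuous underlying map. I expect the cleanest route to be the specialisation-plus-closed-point argument above, valid precisely because the schemes in question are of finite type over an algebraically closed field, rather than any formal separatedness argument, which is unavailable for maps that are merely topologically continuous.
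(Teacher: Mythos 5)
Your proposal is correct, and its engine is the same as the paper's: the $S_{2}$ hypothesis together with the codimension $\geq 2$ assumption gives depth $\geq 2$ along the bad locus, so $\mathcal{O}_{X}\to j_{*}\mathcal{O}_{U}$ is an isomorphism and the structure-sheaf map $\mathcal{O}_{Y}\to\phi_{*}\mathcal{O}_{X}$ extends from $U$ to all of $X$. The paper's proof stops exactly there and declares $\phi$ a morphism of schemes; you correctly isolate the remaining point --- that the ring maps $\theta\colon B\to A$ obtained from the extension must induce, via $\Spec$, the \emph{prescribed} continuous map, equivalently that the stalk maps are local --- and you also rightly observe that the usual separatedness/diagonal argument is unavailable because $\phi$ is a priori only continuous (and $X$ need not be reduced). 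Your resolution via Krull's principal ideal theorem (a minimal prime over $(\theta(b))$ has height $\leq 1$, hence lies in $U$, and openness of $\phi^{-1}(D(b))$ under generisation then forces $\mathfrak{p}_{\psi(x)}\subseteq\mathfrak{p}_{\phi(x)}$) combined with the Jacobson property and sobriety is sound; it does use that the schemes are of finite type over the algebraically closed base field, which is narrower than the lemma's stated generality but covers every application made of it in the paper. In short, you give a complete argument where the paper gives a one-line sketch of the same strategy.
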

\begin{proof}Let $\mathcal{U}\subset X$ be an open set, whose complement has codimension at least two, where $\phi$ is a morphism. Let $f$ be a regular function on an open subset $\mathcal{V}\subseteq Y$, then  $f\circ\phi_{|\phi^{-1}(\mathcal{V})\cap\mathcal{U}}\in\mathcal{O}_{X}(\phi^{-1}(\mathcal{V})\cap\mathcal{U})$ is a regular function on $\phi^{-1}(\mathcal{V})\cap\mathcal{U}$. Since $X$ is $S_{2}$ $f\circ\phi_{|\phi^{-1}(\mathcal{V})\cap\mathcal{U}}$ extends to a regular function on $X$. So we get a morphism of sheaves $\mathcal{O}_{Y}\rightarrow\phi_{*}\mathcal{O}_{X}$ and $\phi:X\rightarrow Y$ is a morphism of schemes.
\end{proof}
Any transposition $i\leftrightarrow j$ in $S_{n}$ defines a birational map $\tilde{\phi}_{i,j}:\overline{M}_{g,A[n]}\dasharrow\overline{M}_{g,A[n]}$. We aim to understand when this map is an automorphism. Our main tool is the following proposition.
\begin{Proposition}\label{atra}
The following are equivalent:
\begin{itemize}
\item[$(a)$] $i\leftrightarrow j$ is admissible,
\item[$(b)$] $\tilde{\phi}_{i,j}$ is an automorphism.
\item[$(c)$] $\overline{M}_{g,A_{i}[n-1]}\cong\overline{M}_{g,A_{j}[n-1]}$, where  $A_{i} = \{a_{1},...,\hat{a}_{i},...,a_{n}\}$ and $A_{j} = \{a_{1},...,\hat{a}_{j},...,a_{n}\}$, when they are defined.
\end{itemize}
\end{Proposition}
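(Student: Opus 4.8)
The plan is to prove the cyclic chain $(a)\Rightarrow(b)\Rightarrow(c)\Rightarrow(a)$, all three steps resting on a close analysis of how the birational involution $\tilde{\phi}_{i,j}$ behaves along the boundary. I would record at the outset that $\tilde{\phi}_{i,j}$ is its own inverse, restricts to an isomorphism on the interior $M_{g,n}$, and is defined in codimension one because $\overline{M}_{g,A[n]}$ is normal and proper (Remark \ref{normality}); thus everything reduces to boundary behaviour. Throughout I would use that, by Lemma \ref{efm}, in the relevant range all forgetful morphisms $\pi_i:\overline{M}_{g,A[n]}\to\overline{M}_{g,A_i[n-1]}$ are defined.

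For $(a)\Rightarrow(b)$ I would check that admissibility forces $\tilde{\phi}_{i,j}$ to extend to a continuous bijection of $\overline{M}_{g,A[n]}$ contracting no boundary divisor. The divisors at risk are the rational-tail divisors $\Delta_{0,S}$, present exactly when $\sum_{k\in S}a_k>1$, with $i\in S$ and $j\notin S$; each has dimension $\dim\overline{M}_{g,A[n]}-1$ once $|S|\geq 2$. Relabeling carries $\Delta_{0,S}$ towards the configuration on $S'=(S\setminus\{i\})\cup\{j\}$, and I would split into two cases. When $|S|\geq 3$ one has $|S\setminus\{i\}|\geq 2$, so Definition \ref{atrans} gives $\sum_{k\in S}a_k>1\iff\sum_{k\in S'}a_k>1$; hence $\Delta_{0,S'}$ is again a tail divisor and $\tilde{\phi}_{i,j}$ maps $\Delta_{0,S}$ birationally onto it. When $|S|=2$, say $S=\{i,h\}$, relabeling may force the $2$-pointed tail to collapse, but its image is then the collision divisor $\{x_j=x_h\}$, of the same dimension, and this tail/collision exchange is reversible; this is precisely where the quantifier $r\geq 2$ of Definition \ref{atrans} is used and where no hypothesis is required. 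In either case no divisor is contracted, so the relabeling defines a continuous bijection that is biregular away from a locus of codimension two. Since $\overline{M}_{g,A[n]}$ is normal, hence $S_2$, Lemma \ref{s2lemma} promotes this to a morphism, and because $\tilde{\phi}_{i,j}$ is an involution the same argument applied to its inverse shows it is an automorphism. Conversely, if the transposition is not admissible the offending set of size $\geq 3$ produces a tail divisor contracted onto a collision locus of codimension $\geq 2$; an involution cannot contract a divisor and remain a morphism, so $(b)\Rightarrow(a)$ holds directly as well.

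For $(b)\Rightarrow(c)$ I would use the two forgetful fibrations. The automorphism $\tilde{\phi}_{i,j}$ interchanges $\pi_i$ and $\pi_j$: a fiber of $\pi_i$ is obtained by letting the $i$-th point vary, and relabeling turns this into a fiber of $\pi_j$. Thus $\tilde{\phi}_{i,j}$ sends $\pi_i$-fibers isomorphically onto $\pi_j$-fibers and descends to an isomorphism of the two bases, namely $\overline{M}_{g,A_i[n-1]}\cong\overline{M}_{g,A_j[n-1]}$, which is $(c)$.

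Finally, for $(c)\Rightarrow(a)$ I would argue by contraposition through Hassett's chamber structure. After the relabeling $j\mapsto i$, the two spaces of $(c)$ carry weight data agreeing on every marking except one, whose weight is $a_i$ in the first and $a_j$ in the second; moving that single weight from $a_i$ to $a_j$ crosses only the walls $\sum_{k\in S}a_k=1$ for subsets $S$ containing the moving marking whose two endpoints fall on opposite sides. A failure of admissibility produces such a wall with $|S|\geq 3$, across which a tail divisor is genuinely contracted onto a collision locus of codimension $\geq 2$, whereas walls with $|S|\leq 2$ are crossed by isomorphisms exactly as in the case analysis above. Hence non-admissibility forces the two spaces to differ by a nontrivial divisorial contraction and so to have different Picard numbers, which—both spaces being $\mathbb{Q}$-factorial with finite quotient singularities by Remark \ref{normality}—is an isomorphism invariant, contradicting $(c)$. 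The main obstacle is the boundary bookkeeping in $(a)\Rightarrow(b)$: pinning the threshold to $|S|\geq 3$, matching it precisely with the quantifier $r\geq 2$ of Definition \ref{atrans}, and verifying that $2$-pointed tails are traded reversibly for collision divisors, so that $\tilde{\phi}_{i,j}$ is genuinely a continuous bijection before Lemma \ref{s2lemma} is invoked.
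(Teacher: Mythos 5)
Your proof is correct and its skeleton is the same cyclic chain $(a)\Rightarrow(b)\Rightarrow(c)\Rightarrow(a)$ used in the paper; the implications $(a)\Rightarrow(b)$ and $(c)\Rightarrow(a)$ match the paper's arguments in substance. For $(a)\Rightarrow(b)$ the paper lifts to $\overline{M}_{g,n}$ via the reduction $\rho$ and tracks which boundary divisors $\rho$ contracts, whereas you organize the same bookkeeping directly in terms of which $\Delta_{0,S}$ and collision loci are divisors of $\overline{M}_{g,A[n]}$; both versions conclude by extending over a codimension-two locus and invoking normality plus Lemma \ref{s2lemma}, and both leave slightly implicit the regularity of the tail/collision exchange across the two-point divisors (which follows since, by Hassett's description, the reduction is a local isomorphism away from the $\Delta_{0,I}$ with $|I|\geq 3$). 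For $(c)\Rightarrow(a)$ you make explicit the Picard-number argument that the paper compresses into the assertion that the reduction morphism between the two spaces must be an isomorphism; this is a useful clarification rather than a deviation. The one genuinely different step is $(b)\Rightarrow(c)$: the paper deduces the induced isomorphism of the bases from its fibration theorems (Proposition \ref{gkmha} for $g\geq 2$, Lemma \ref{gkmg1} for $g=1$), which apply to an arbitrary automorphism, while you exploit the explicitness of the relabeling map to argue that $\tilde{\phi}_{i,j}$ carries $\pi_i$-fibers to $\pi_j$-fibers and then descends. Your route is more elementary and avoids the fibration machinery, but the descent deserves a sentence: you need $\pi_j\circ\tilde{\phi}_{i,j}$ to be constant on \emph{every} fiber of $\pi_i$ (not only the general one), after which the standard factorization of a morphism through a proper surjection with connected fibers onto a normal base gives a morphism $\overline{\phi}_{i,j}$ on the bases, invertible by symmetry. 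The set-theoretic description of the fibers does hold over every point, so this can be completed; note, however, that the paper's heavier route is the one that survives in Lemma \ref{surj}, where the automorphism is no longer explicit and only the fibration theorems are available.
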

\begin{proof}
$(a)\Rightarrow (b)$ By \cite[Theorem 4.1]{Ha} we have a birational reduction morphism 
$$\rho:\overline{M}_{g,n}\rightarrow\overline{M}_{g,A[n]}.$$
Let $\phi_{i,j}\in\Aut(\overline{M}_{g,n})$ be the automorphism induced by the transposition $i\leftrightarrow j$. Then we have a commutative diagram
\[
  \begin{tikzpicture}[xscale=2.5,yscale=-1.2]
    \node (A0_0) at (0, 0) {$\overline{M}_{g,n}$};
    \node (A0_1) at (1, 0) {$\overline{M}_{g,n}$};
    \node (A1_0) at (0, 1) {$\overline{M}_{g,A[n]}$};
    \node (A1_1) at (1, 1) {$\overline{M}_{g,A[n]}$};
    \path (A0_0) edge [->]node [auto] {$\scriptstyle{\phi_{i,j}}$} (A0_1);
    \path (A1_0) edge [->,dashed]node [auto] {$\scriptstyle{\tilde{\phi}_{i,j}}$} (A1_1);
    \path (A0_1) edge [->]node [auto] {$\scriptstyle{\rho}$} (A1_1);
    \path (A0_0) edge [->]node [auto,swap] {$\scriptstyle{\rho}$} (A1_0);
  \end{tikzpicture}
  \]
where a priori $\tilde{\phi}_{i,j}$ is just a birational map. By \cite[Proposition 4.5]{Ha} $\rho$ contracts the divisors $\Delta_{0,I}$ whose general points correspond to curves with two irreducible components, a genus zero smooth curve with $I = \{i_{1},...,i_{r}\}$ as marking set and a genus $g$ curve with marking set $J = \{1,...,n\}\setminus I = \{j_{1},...,j_{n-r}\}$, such that $a_{i_{1}}+...+a_{i_{r}}\leq 1$ and $2<r\leq n$. A priori $\tilde{\phi}_{i,j}$ is defined just on the open subset of $\overline{M}_{g,A[n]}$ parametrizing curves where each $x_{i},x_{j}$ coincide at most with another marked point. Let $\mathcal{U}\subset\overline{M}_{g,A[n]}$ be the open subset parametrizing such curves.\\ 
Let us consider a curve $[C,x_{1},...,x_{i},...,x_{j},...,x_{n}]$ with $x_{i} = x_{i_{2}} = ... = x_{i_{r}}$, $2<r\leq n-1$. By Definition \ref{defha} we have $a_{i}+a_{i_{2}}+...+a_{i_{r}}\leq 1$. Then $\rho^{-1}([C,x_{1},...,x_{i},...,x_{j},...,x_{n}])$ lies on a divisor of type $\Delta_{I,J}$. By Definition \ref{atrans} we have $a_{j}+a_{i_{2}}+...+a_{i_{r}}\leq 1$. So $(\rho\circ\phi_{i,j}\circ\rho^{-1})([C,x_{1},...,x_{i},...,x_{j},...,x_{n}]) = [C,x_{1},...,x_{j},...,x_{i},...,x_{n}]$ with $x_{j} = x_{i_{2}} = ... = x_{i_{r}}$. We consider the same construction for curves $[C,x_{1},...,x_{i},...,x_{j},...,x_{n}]$ with $x_{j} = x_{i_{2}} = ... = x_{i_{r}}$, $2<r\leq n-1$ and extend $\tilde{\phi}_{i,j}$ as a continuous map by 
$$\tilde{\phi}_{i,j}([C,x_{1},...,x_{i},...,x_{j},...,x_{n}]):= [C,x_{1},...,x_{j},...,x_{i},...,x_{n}].$$
The continuous map $\tilde{\phi}_{i,j}:\overline{M}_{g,A[n]}\rightarrow\overline{M}_{g,A[n]}$ defines an isomorphism from an open subset $\mathcal{U}$ to itself. Furthermore, the complement of $\mathcal{U}$ has codimension at least two. This is enough to conclude, by Remark \ref{normality}  and Lemma \ref{s2lemma}, that $\tilde{\phi}_{i,j}$ is an isomorphism.\\
$(b)\Rightarrow (c)$ This is a particular case of Lemma \ref{lsal}.\\
$(c)\Rightarrow (a)$ We may assume that $a_i\leq a_j$. Then, by \cite[Proposition 4.5]{Ha}, the reduction morphism $\rho_{A_i[n-1],A_j[n-1]}:\overline{M}_{g,A_{j}[n-1]}\to\overline{M}_{g,A_{i}[n-1]}$ is an isomorphism. Therefore, again by \cite[Proposition 4.5]{Ha},
$a_{j}+\sum_{k = 1}^{r}a_{h_{k}}\leq 1$ and $a_{i}+\sum_{k = 1}^{r}a_{h_{k}}>1$ is possible only if $r\leq 1$.  This shows that $i\leftrightarrow j$ is admissible.
\end{proof}
The following definition is the key to describe the image of $\chi$.
\begin{Definition}\label{def:chi_admissible} 
Let $ \overline{M}_{g,A_{[n]}}$ be a Hassett's space with a morphism $\chi_{g,A[n]}:\Aut(\overline{M}_{g,A_{[n]}})\rightarrow S_m$ as in (\ref{eq:chi}). Then $\mathcal{S}_{A[n]}\subseteq S_m$ is the subgroup generated by admissible transpositions.
\end{Definition}
We are ready to compute the image of $\chi_{g,A[n]}$.

\begin{Lemma}\label{surj}
If $2g-2+n\geq 3$ and in Equation (\ref{eq:chi}) we have $m = n$ then the subgroup $\mathcal{S}_{A[n]}\subset S_m$ is the image of $\chi_{g,A[n]}$ and 
$$\Aut(\overline{M}_{g,A_{[n]}})\iso \mathcal{S} _{A[n]}.$$
\end{Lemma}
\begin{proof}
By Remark~\ref{n<3} and \cite{Ma} we may assume that $n\geq 3$.
By Proposition \ref{atra} we have $\mathcal{S}_{A[n]}\subset\im( \chi_{g,A[n]})$. Let $\sigma_{\phi} = \chi(\phi)$ be the permutation induced by $\phi\in\Aut(\overline{M}_{g,A[n]})$. By Proposition \ref{k2015} $\ker(\chi_{g,A[n]})$ is trivial, $\sigma_{\phi}$ induces an automorphism of $\overline{M}_{g,A[n]}$, and $\phi$ acts on $\overline{M}_{g,A[n]}$ as the permutation $\sigma_{\phi}$.\\
Consider a cycle $\sigma_{i_1,...,i_r}=(i_{1}...i_{r})$ in $\sigma_{\phi}$ and its decomposition $(i_{1}i_{r})(i_{1}i_{r-1})...(i_{1}i_{3})(i_{1}i_{2})$ as product of transpositions. Since $\sigma_{i_1,...,i_r}$ induces an automorphism and $\sigma_{i_1,...,i_r}(i_{j-1}) =i_j$. By Lemma \ref{lsal} applied to the permutation $\sigma_{i_1,...,i_r}$ for any $j = 2,...,r$ we have the following commutative diagram: 
  \[
  \begin{tikzpicture}[xscale=3.5,yscale=-1.2]
    \node (A0_0) at (0, 0) {$\overline{M}_{g,A[n]}$};
    \node (A0_1) at (1, 0) {$\overline{M}_{g,A[n]}$};
    \node (A1_0) at (0, 1) {$\overline{M}_{g,A_{j-1}[n-1]}$};
    \node (A1_1) at (1, 1) {$\overline{M}_{g,A_{j}[n-1]}$};
    \path (A0_0) edge [->]node [auto] {$\scriptstyle{\sigma_{i_1,...,i_r}}$} (A0_1);
    \path (A1_0) edge [->]node [auto] {$\scriptstyle{\tilde{\sigma}_{i_1,...,i_r}}$} (A1_1);
    \path (A0_1) edge [->]node [auto] {$\scriptstyle{\pi_{i_j}}$} (A1_1);
    \path (A0_0) edge [->,swap]node [auto] {$\scriptstyle{\pi_{i_{j-1}}}$} (A1_0);
  \end{tikzpicture}
  \]
where $\tilde{\sigma}_{i_1,...,i_r}:\overline{M}_{g,A_{j-1}[n-1]}\rightarrow\overline{M}_{g,A_{j}[n-1]}$ is an isomorphism. Therefore, we get
$$\overline{M}_{g,A_{1}[n-1]}\cong\overline{M}_{g,A_{2}[n-1]}\cong ...\cong\overline{M}_{g,A_{j-1}[n-1]}\cong \overline{M}_{g,A_{j}[n-1]}\cong ...\cong \overline{M}_{g,A_{r-1}[n-1]}\cong\overline{M}_{g,A_{r}[n-1]}.$$
In particular, $\overline{M}_{g,A_{1}[n-1]}\cong \overline{M}_{g,A_{j}[n-1]}$ for any $j=2,...,r$. Then, by Proposition \ref{atra}, the transposition $(i_{1}i_{j})$ is admissible for any $j = 2,...,r$, and $\sigma_{\phi}\in \mathcal{S}_{A[n]}$.
\end{proof}

The following Lemma is well known and can be checked via a direct computation, see also \cite{GP}.
\begin{Lemma}\label{semi-direct}
Let $H_{n-3,Cr}$ be the group generated by the torus $(\mathbb{C}^{*})^{n-3}$ of automorphisms of $\mathbb{P}^{n-3}$ fixing the toric invariant points and by the standard Cremona transformation $Cr:\mathbb{P}^{n-3}\dasharrow\mathbb{P}^{n-3}$. Then 
$$H_{n-3,Cr}\cong (\mathbb{C}^{*})^{n-3}\rtimes S_2$$
where $S_2 =\{Id,Cr\}$, and the semi-direct product is not direct.\\
Now, let $S_{n-2}$ act on $\mathbb{P}^{n-3}$ by permuting the homogeneous coordinates. Then the action of $S_{n-2}$ commutes with the action of $S_2$ but does not commute with the action of $(\mathbb{C}^{*})^{n-3}$. Finally $(\mathbb{C}^{*})^{n-3}$ is a normal subgroup of the group generated by $H_{n-3,Cr}$ and $S_{n-2}$.
\end{Lemma}

We are ready to compute the automorphism group. Let us start with the  spaces $\overline{M}_{0,A_{r,s}[n]}$. To help the reader in getting acquainted with the ideas of the proof we describe in detail the case $n=6$, where all issues appear. Construction \ref{kblu} is as follows:
\begin{itemize}
\item[-] $r = 1$, $s = 1$, gives $\mathbb{P}^{3}$, 
\item[-] $r = 1$, $s = 2$, we blow-up the points $p_{1},...,p_{4}\in\mathbb{P}^{3}$ and get the Hassett's space with weights $A_{1,2}[6]:= (1/4,1/4,1/4,1/4,1/2,1)$,
\item[-] $r = 1$, $s = 3$, we blow-up the strict transforms of the lines $\left\langle p_{i},p_{j}\right\rangle$, $i,j=1,...,4$, and get the Hassett's space with weights $A_{1,3}[6]:= (1/4,1/4,1/4,1/4,3/4,1)$,
\item[-] $r = 2$, $s = 1$, we blow-up the point $p_{5}$, and get the Hassett's space with weights $A_{2,1}[6]:= (1/3,1/3,1/3,1/3,1,1)$,
\item[-] $r = 2$, $s = 2$, we blow-up the strict transforms of the lines $\left\langle p_{i},p_{5}\right\rangle$, $i=1,...,3$, and get the Hassett's space with weights $A_{2,2}[6]:= (1/3,1/3,1/3,2/3,1,1)$,
\item[-] $r = 3$, $s = 1$, we blow-up the strict transform of the line $\left\langle p_{4},p_{5}\right\rangle$ and get the Hassett's space with weights $A_{3,1}[6]:= (1/2,1/2,1/2,1,1,1)$, that is $\overline{M}_{0,6}$.
\end{itemize}
\begin{Proposition}\label{n6}
If $n = 6$ we have:
\begin{itemize}
\item[-] $\Aut(\overline{M}_{0,A_{1,3}[6]})\cong (\mathbb{C}^{*})^{3}\rtimes (S_2\times S_{4})$,
\item[-] $\Aut(\overline{M}_{0,A_{2,1}[6]})\cong  S_4\times S_2$,
\item[-] $\Aut(\overline{M}_{0,A_{2,2}[6]})\cong S_3\times S_3$,
\item[-] $\Aut(\overline{M}_{0,A_{3,1}[6]})\cong S_6$
\end{itemize} 
\end{Proposition}
\begin{proof} In the notations of Construction \ref{kblu} let $\rho:\overline{M}_{0,A_{r,s}[6]}\to\P^3$ be a reduction morphism factorizing the morphism $f_6:\overline{M}_{0,6}\rightarrow\mathbb{P}^3$. 
Let us start with  $r = 1, s = 3$. Then, as in  Equation (\ref{mor1}), we have the morphism
$$\chi_{1,3}:\Aut(\overline{M}_{0,A_{1,3}[6]})\rightarrow \mathcal{S}_{A_{2,1}[6]}\subseteq S_4.$$
The first four weights are equal therefore $\chi_{1,3}$ is surjective. An automorphism $\phi$ of $\overline{M}_{0,A_{r,s}[6]}$ whose image in $S_{4}$ is the identity induces, via $\rho$, a birational transformation $\phi_{\H}:\mathbb{P}^{3}\dasharrow\mathbb{P}^{3}$ stabilizing the lines through $p_{i}$ for $i=1,2,3,4$. Let $\mathcal{H}\subseteq |\mathcal{O}_{\mathbb{P}^{3}}(d)|$ be the linear system associated to $\phi_{\H}$. If $L_{i}$ is a line through $p_{i}$ we have
$$
\begin{tabular}{l}
$\deg(\phi_{\H}(L_{i})) = d - \mult_{p_{i}}\mathcal{H} = 1.$
\end{tabular} 
$$
This yields 
\begin{equation}\label{eq:3d}
\mult_{p_{i}}\mathcal{H} = d-1,\ \mult_{\langle p_i,p_j\rangle}\mathcal{H}\geq d-2,\mbox{\rm and}\ \mult_{\langle p_i,p_j,p_k\rangle}\mathcal{H}\geq d-3. 
\end{equation}
The linear system  $\mathcal{H}$ does not have fixed components therefore $d\leq 3$ and in (\ref{eq:3d}) all inequalities are equalities.
If $d = 1$ then $\phi_{\H}$ is an automorphism of $\mathbb{P}^{3}$ fixing $p_{1},p_{2},p_{3},p_{4}$. These correspond to diagonal, non-singular matrices.\\
If $d\neq 1$,  again by Theorem \ref{facg0}, we have the following commutative diagram 
 \[
  \begin{tikzpicture}[xscale=2.5,yscale=-1.2]
    \node (A0_0) at (0, 0) {$\overline{M}_{0,A[n]}$};
   \node (A0_1) at (1, 0) {$\overline{M}_{0,A[n]}$};
    \node (A1_0) at (0, 1) {$\overline{M}_{0,A[n-2]}$};
    \node (A1_1) at (1, 1) {$\overline{M}_{0,A[n-2]}$};
    \path (A0_0) edge [->]node [auto] {$\scriptstyle{\phi^{-1}}$} (A0_1);
   \path (A1_0) edge [->,dashed]node [auto] {$\scriptstyle{\tilde{\phi}}$} (A1_1);
    \path (A0_1) edge [->]node [auto] {$\scriptstyle{\pi_{i_1,i_2}}$} (A1_1);
    \path (A0_0) edge [->]node [auto,swap] {$\scriptstyle{\pi_{j_i,j_2}}$} (A1_0);
  \end{tikzpicture}
  \]
Therefore, $\phi_{\H}$ induces a Cremona transformation on the general plane containing the line $\left\langle p_{1},p_{2}\right\rangle$. So on such a general plane the linear system $\mathcal{H}$ needs a third base point, outside $\langle p_{1},p_{2}\rangle$. This means that in $\mathbb{P}^{3}$ a codimension two linear space has to be blown-up. So $s = d = 3$ and $\phi_{\H}$ is the standard Cremona transformation of $\mathbb{P}^{3}$. Kapranov proved \cite[Proposition 2.12]{Ka} that this automorphism corresponds to the transposition of the last two marked points. We conclude that $\Ker(\chi_{1,3}) = H_{3,Cr}$. 
Now, by Lemma \ref{semi-direct} the action of $S_4$ commutes with the action of $S_2$ but does not with the action of $(\mathbb{C}^{*})^{3}$. By Lemma \ref{semi-direct} $H_{3,Cr}\cong (\mathbb{C}^{*})^{3}\rtimes S_2$ and $(\mathbb{C}^{*})^{3}$ is normal in $H_{3,Cr}\rtimes S_{4}$. This yields $\Aut(\overline{M}_{0,A_{1,3}[6]})\cong H_{3,Cr}\rtimes S_{4}\cong (\mathbb{C}^{*})^{3}\rtimes (S_2\times S_{4}) $.\\
Now, let us consider the case $r = 2, s=1$. There is a morphism (\ref{mor2}):
$$\chi_{2,1}:\Aut(\overline{M}_{0,A_{2,1}[6]})\rightarrow S_4.$$ 
The first four weights are equal therefore $\chi_{2,1}$ is surjective. This moduli space is realized after the blow-up of the point $p_5$. Therefore, the only linear automorphism that lifts is the identity.  Arguing as in the previous case we are left with $\phi_\H$ the standard Cremona transformation. We may assume $p_1=[1:0:0:0],...,p_4 = [0:0:0:1]$ and $p_5 = [1:1:1:1]$. The standard Cremona is given by:
$$
\begin{array}{cccc}
Cr: & \mathbb{P}^3 & \dasharrow & \mathbb{P}^3\\
 & [x_0:x_1:x_2:x_3] & \longmapsto & [\frac{1}{x_0}:\frac{1}{x_1}:\frac{1}{x_2}:\frac{1}{x_3}]
\end{array}
$$
and $Cr(p_5) = p_5$. Therefore, $\phi_{\H} = Cr$ lifts to an automorphism of $\overline{M}_{0,A_{2,1}[6]}$ contained in $\Ker(\chi_{2,1})$. Kapranov proved \cite[Proposition 2.12]{Ka} that this automorphism corresponds to the transposition of the last two marked points. We conclude that $\Ker(\chi_{2,1}) = S_2$. Again the actions of $S_4$ and of the Cremona commute. This yields a direct product $\Aut(\overline{M}_{0,A_{2,1}[6]})\cong  S_4\times S_2$.\\
When either $r = 2$ and $s = 2$ or $r=3$, by Equation (\ref{mor3}) and Lemma~\ref{surj} there is an isomorphism
$$\chi_{r,s}:\Aut(\overline{M}_{0,A_{r,s}[6]})\rightarrow \mathcal{S}_{A_{r,s}[6]}\subseteq S_6.$$
It is a simple check to see that $A_{2,2}[6]=S_3\times S_3$, while $A_{3,1}[6]=S_6$.
\end{proof}

Now, let us consider the general case. The following lemma generalizes the ideas in the proof of Proposition \ref{n6} and leads us to control the degree and type of linear systems involved in the computation of the automorphisms of the spaces appearing in Construction \ref{kblu}.
\begin{Lemma}\label{kl}
Let $\H\subset|\O_{\P^{n-3}}(d)|$ be a linear system, with $d>1$, and $\{p_1,\ldots,p_a\}\subset\P^{n-3}$ a collection of points. Assume that $\mult_{p_i}\H=d-1$, for $i=1,\ldots,a$. Let $L_{i_1,\ldots,i_h}=\Span{p_{i_1},\ldots,p_{i_h}}$ be the linear span of $h$ points in $\{p_1,\ldots,p_a\}$, then 
$$\mult_{L_{i_1,\ldots,i_h}} \H\geq d-h.$$
Assume further that $\H$ does not have fixed components, $a=n-2$, $\H$ preserves the lines through the points $p_i$, and the rational map, say $\f_\H$, induced by $\H$ lifts to an automorphism of  $\overline{M}_{0,{A}_{r,s}[n]}$. Then 
$$\mult_{L_{i_1,\ldots,i_h}} \H= d-h,$$
and either $r\geq 2$ or $r=1$ and $s = d = n-3$. Furthermore $\f_\H$ is the standard Cremona transformation centred at $\{p_1,\ldots,p_{n-2}\}$.
\end{Lemma}
\begin{proof}
The first statement is meaningful only for $h<d$. We prove it by a double induction on $d$ and $h$. The initial case $d=2$ and $a=1$ is immediate.  Let us consider $\Pi:=L_{p_{i_1},\ldots,p_{i_h}}$ and $L_j=\Span{p_{i_1},\ldots,\hat{p}_{i_j},...,p_{i_h}}$ the linear span of $h-1$ points in $\{p_1,\ldots,p_h\}$. Then by induction hypothesis
$$\mult_{L_j}\H_{|\Pi}\geq d-(h-1),$$ 
and  $L_{j}$ is a divisor in $\Pi$. By assumption $d>h$ hence $d(h-1)>h(h-1)$ and
$$h(d-(h-1))>d.$$
This yields $\Pi\subset\Bl\H$. Let $A$ be a general linear space of dimension $h$ containing $\Pi$. Then we may decompose $\H_{|A}=\Pi+\H_1$ with $\H_1\subset|\O(d-1)|$ and
\begin{equation}
\mult_{L_j}\H_1 \geq d-1-(h-1).
\label{eq:mult}
\end{equation}
Arguing as above this forces $\Pi\subset\H_1$ as long as $h(d-1-(h-1))>d-1$, that is $d-1>h$, and recursively gives the first statement.

Assume that $r=1$ and the map $\f_\H$ lifts to an automorphism of  $\overline{M}_{0,{A}_{1,s}[n]}$. This forces some immediate consequences:
\begin{itemize}
\item[$(a)$] the points $p_i$ are in general position,
\item[$(b)$] the scheme theoretic base locus of $\H$ is the span of all subsets of at most $s-1$ points.
\end{itemize}
Since $L_{p_{i_1},\ldots,p_{i_s}}\not\subset\Bl(\H)$ equation (\ref{eq:mult}) yields
\begin{equation}\label{eq:ub}
s\geq d.
\end{equation}
Furthermore, the hyperplane $H = \Span{p_{i_{1}},...,p_{i_{n-3}}}$ contains $(n-3)$ codimension two linear spaces of the form $L_{j}$, each of multiplicity $d-(n-4)$ for the linear system $\H$. The linear system $\H$ does not have fixed components hence $(n-3)(d-n+4)\leq d$ and we get
$$d\leq n-3.$$
\begin{claim}
$\Bl\H\not\supset L_{i_1,\ldots,i_d}$.
\end{claim}
\begin{proof}
Assume that $\Bl\H\supset L_{i_1,\ldots,i_d}$. Then the restriction $\H_{|L_{i_1,\ldots,i_{d+1}}}$ contains a fixed divisor of degree $d+1$ and $L_{i_1,\ldots,i_{d+1}}\subset\Bl\H$. A recursive argument then shows that $\Bl\H$ has to contain all the linear spaces spanned by the $n-2$ points yielding a contradiction.
\end{proof}
The claim above together with $(b)$ and equation (\ref{eq:ub}) yield
$$s=d,$$ 
and
$$\mult_{L_{i_1,\ldots,i_{d-1}}}\H = d-(d-1) = 1.$$
Then, recursively this forces the equality in equation (\ref{eq:mult}) for any value of $h$. To conclude let us consider the commutative diagram
\[
  \begin{tikzpicture}[xscale=2.5,yscale=-1.2]
    \node (A0_0) at (0, 0) {$\overline{M}_{0,A_{1,s}[n]}$};
    \node (A0_1) at (1, 0) {$\overline{M}_{0,A_{1,s}[n]}$};
    \node (A1_0) at (0, 1) {$\mathbb{P}^{n-3}$};
    \node (A1_1) at (1, 1) {$\mathbb{P}^{n-3}$};
    \node (A2_0) at (0, 2) {$\mathbb{P}^{n-5}$};
    \node (A2_1) at (1, 2) {$\mathbb{P}^{n-5}$};
    \path (A0_0) edge [->]node [auto] {$\scriptstyle{\phi}$} (A0_1);
    \path (A2_0) edge [->,dashed]node [auto] {$\scriptstyle{}$} (A2_1);
    \path (A1_0) edge [->,dashed]node [auto] {$\scriptstyle{\phi_{\mathcal{H}}}$} (A1_1);
    \path (A1_0) edge [->,dashed]node [auto] {$\scriptstyle{}$} (A2_0);
    \path (A1_1) edge [->,dashed]node [auto] {$\scriptstyle{}$} (A2_1);
    \path (A0_0) edge [->]node [auto] {$\scriptstyle{}$} (A1_0);
    \path (A0_1) edge [->]node [auto] {$\scriptstyle{}$} (A1_1);
  \end{tikzpicture}
  \]
By Theorem \ref{facg0} we know that $\f$ composed with a forgetful map onto $\overline{M}_{0,{A}_{1,s}[n-2]}$ is again a forgetful map. This forces the map $\f_\H$ to induce a Cremona transformation on the general plane containing $\{p_{i_1},p_{i_2}\}$. Let $\Pi$ be a general plane containing $\{p_{i_1},p_{i_2}\}$. Then the mobile part of $\H_{|\Pi}$ is a linear system of conics with two simple base points in $p_{i_1}$ and $p_{i_2}$. This forces the presence of a further base point to produce a Cremona transformation. Therefore a codimension two linear space has to be blown-up. This shows that $s=d=n-3$. To conclude we observe that the linear system $\mathcal{H}$ of forms of degree $n-3$ in $\P^{n-3}$ such that $\mult_{L_{i_1,...,i_h}}\mathcal{H} = n-h-3$ for $h = 1,...,n-4$, has dimension $n-2$ and gives rise to the standard Cremona transformation.

Assume that $r\geq 2$. Then, by the above argument, $\H$ is a sublinear system, of projective dimension at least $n-3$, of the linear system inducing the standard Cremona transformation therefore $\H$ is the linear system of the standard Cremona transformation.
\end{proof}
\begin{Theorem}\label{authaka}
For the Hassett's spaces appearing in Construction \ref{kblu} we have:
\begin{itemize}
\item[-] $\Aut(\overline{M}_{0,A_{1,n-3}[n]})\cong (\mathbb{C}^{*})^{n-3}\rtimes (S_2\times S_{n-2})$,
\item[-] $\Aut(\overline{M}_{0,A_{2,1}[n]})\cong S_{n-2}\times S_2$, 
\item[-] $\Aut(\overline{M}_{0,A_{2,s}[n]})\cong S_{n-3}\times S_2$, if $1< s <n-4$, 
\item[-] $\Aut(\overline{M}_{0,A_{r,s}[n]})\cong \mathcal{S}_{A_{r,s}[n]}$, if either $r = 2$ and $s=n-4$, or\: $r \geq 3$,
\end{itemize}
\end{Theorem}
\begin{proof} In the notations of Construction \ref{kblu} let $\rho:\overline{M}_{0,A_{r,s}[n]}\to\P^{n-3}$ be a reduction morphism factorizing the morphism $f_n:\overline{M}_{0,n}\rightarrow\mathbb{P}^{n-3}$. Let us begin with the case $r = 1$, $s = n-3$, then, by  Equation (\ref{mor1}), we have a morphism
$$\chi_{1,n-3}:\Aut(\overline{M}_{0,A_{1,n-3}[n]})\rightarrow S_{n-2}.$$
Furthermore, since the first $n-2$ weights are all equal we have that $\chi_{1,n-3}$ is surjective. Let $\phi$ be an automorphism of $\overline{M}_{0,A_{1,n-3}[n]}$ such that $\chi_{1,n-3}(\phi)$ is the identity. Then $\phi$ preserves the forgetful maps onto $\overline{M}_{0,{A}_{1,n-3}[n-1]}$ and the birational map $\phi_{\H}$ induced by $\phi$ stabilizes the lines through $p_{1},...,p_{n-2}$.\\
Let $\mathcal{H}\subseteq |\mathcal{O}_{\mathbb{P}^{n-3}}(d)|$ be the linear system associated to $\phi_{\H}$. If $L_{i}$ is a line through $p_{i}$ we have
$$\deg(\phi_{\H}(L_{i})) = d - \mult_{p_{i}}\mathcal{H} = 1.$$
So $\mult_{p_{i}}\mathcal{H} = d-1$. If $d=1$ then $\phi_\H$ is a linear automorphism that fixes $(n-2)$ points and corresponds to the diagonal matrices. If $d>1$ then by Lemma \ref{kl}, the linear system $\H$ is associated to the standard Cremona transformation of $\mathbb{P}^{n-3}$, and by Kapranov \cite[Proposition 2.12]{Ka}, induces the transposition $n-1\leftrightarrow n$. This gives $\Ker(\chi_{1,n-3}) = H_{n-3,Cr}$. By Lemma \ref{semi-direct} the action of $S_{n-2}$ commutes with $S_2$ but it does not commute with $(\mathbb{C}^{*})^{n-3}$, and $(\mathbb{C}^{*})^{n-3}$ is normal in $H_{n-3,Cr}\rtimes S_{n-2}$. This yields $\Aut(\overline{M}_{0,A_{1,n-3}[n]})\cong H_{n-3,Cr}\rtimes S_{n-2}\cong (\mathbb{C}^{*})^{n-3}\rtimes (S_2\times S_{n-2})$.\\ 
Now, we consider the case $r = 2$, $s\leq n-5$. Again, by Equation~(\ref{mor2}), we have a morphism
$$\chi_{2,s}:\Aut(\overline{M}_{0,A_{2,s}[n]})\rightarrow S_{n-2}.$$
Recall that the weights are
$$A_{2,s}[n]:= (\underbrace{1/(n-3),...,1/(n-3)}_{(n-3)-\rm{times}}, s/(n-3), 1,1).$$ 
Therefore the transposition $n-2 \leftrightarrow i$ for $i = 1,...,n-3$ induces an automorphism if and only if $s = 1$. This means that $\chi_{2,1}$ is surjective while the image of $\chi_{2,s}$ is $S_{n-3}$ for $s>1$.\\
As before an automorphism $\phi$ of $\overline{M}_{0,A_{2,s}[n]}$ that induces the trivial permutation in $S_{n-2}$ yields a birational transformation $\phi_{\H}:\mathbb{P}^{n-3}\dasharrow\mathbb{P}^{n-3}$ stabilizing the lines through $p_{i}$, for $i=1,2,3,...,n-2$ and it is forced to be either an automorphism of $\mathbb{P}^{n-3}$ or the standard Cremona transformation. We may assume that $p_1=[1:0:...:0],...,p_{n-2} = [0:...:0:1]$ and $p_{n-1} = [1:...:1]$. Due to the blow-up of the point $p_{n-1}$ the only linear transformation that lifts to an automorphism on $\overline{M}_{0,A_{2,s}[n]}$ is the identity. The standard Cremona is given by:
$$
\begin{array}{cccc}
Cr: & \mathbb{P}^{n-3} & \dasharrow & \mathbb{P}^{n-3}\\
 & [x_0:...:x_{n-3}] & \longmapsto & [\frac{1}{x_0}:...:\frac{1}{x_{n-3}}]
\end{array}
$$
Therefore $Cr(p_{n-1}) = p_{n-1}$, and any linear subspace of $\mathbb{P}^{n-3}$ spanned by $p_{n-1}$ and a subset of $\{p_1,...,p_{n-2}\}$ is stabilized by $Cr$. In particular  $\phi_{\H} = Cr$ lifts to an automorphism of $\overline{M}_{0,A_{2,s}[n]}$ contained in $\Ker(\chi_{2,s})$. By \cite[Proposition 2.12]{Ka} this automorphism corresponds to the transposition of the last two marked points. Therefore $\Ker(\chi_{2,s}) = S_2$. When either $r\geq 2$ and $s = n-4$, or $r\geq 3$, by Lemma \ref{surj} there is an isomorphism $\chi_{r,s}:\Aut(\overline{M}_{0,A_{r,s}[n]})\rightarrow \mathcal{S}_{A_{r,s}[n]}\subseteq S_{n}$.
\end{proof}

\begin{Remark}\label{comm}
 For arbitrary weights it is difficult to compute the kernel of $\chi_{0,A[n]}$. We believe that for many  Hassett's spaces $m<n$, the kernel of  $\chi_{0,A[n]}:\Aut(\overline{M}_{0,A[n]})\rightarrow S_m$  is not trivial,  and the actions of $\Ker(\chi_{0,A[n]})$ and $\im( \chi_{0,A[n]})$ on $\overline{M}_{0,A[n]}$ do not commute. 
For instance, consider the Hassett's spaces appearing in \cite[Section 6.3]{Ha}. The space $\overline{M}_{0,A[5]}$ with 
$$A[5] = (1-2\epsilon,1-2\epsilon,1-2\epsilon,\epsilon,\epsilon)$$ 
where $\epsilon$ is an arbitrarily small positive rational number, is isomorphic to $\mathbb{P}^{1}\times\mathbb{P}^{1}$. Since the forgetful morphisms forgetting the fourth and the fifth point are the only fibrations of $\overline{M}_{0,A[5]}$ on $\mathbb{P}^1$ we may still construct a surjective morphism of groups $\chi_{0,A[5]}:\Aut(\overline{M}_{0,A[5]})\rightarrow S_2$ whose kernel is $PGL(2)\times PGL(2)$. In this case $\Aut(\overline{M}_{0,A[5]})\cong \Ker(\chi_{0,A[5]})\rtimes\mathcal{S} _{A[5]}\cong (PGL(2)\times PGL(2))\rtimes S_2$ and the semi-direct product is not direct.
\end{Remark}

To conclude the proof of Theorem \ref{th:aut0} it is enough to compute $\mathcal{S}_{A_{r,s}[n]}$.

\begin{Proposition}\label{sympres}
If $2\leq r\leq n-4$ we have
$$\mathcal{S}_{A_{r,s}[n]}
\iso\left\{\begin{array}{lcl}
S_{n-r}\times S_r & if & r\geq 3,\: s=1,\\
S_{n-r-1}\times S_r & if & r\geq 3,\: 1< s < n-r-2,\\
S_{n-r-1}\times S_{r+1} & if & r\geq 2,\: s = n-r-2.  
\end{array}\right.
$$
If $r = n-3$ then $s =1$, $\overline{M}_{0,A_{n-3,1}[n]}\cong\overline{M}_{0,n}$, and $\mathcal{S}_{A_{n-3,1}[n]}\cong S_n$.
\end{Proposition}
\begin{proof}
Let us begin with the case $2\leq r\leq n-4$. Since $r\leq n-4$ in Equation~(\ref{wdr2}) we have at least three marked points with weight $\frac{1}{n-r-1}$.\\
If $s = 1$ from Equation~(\ref{wdr2}) we see that the first $n-r$ are all equal to $\frac{1}{n-r-1}$ while the last $r$ weights are one. Clearly a transposition between two points of these two sets is not admissible. If $r\geq 3$ we get that the image of the morphism in Equation~(\ref{mor3}) is $\mathcal{S}_{A_{r,1}[n]} = S_{n-r}\times S_r$.\\
Consider the case $1 < s < n-r-2$. Since $s>1$ we have $(n-r-2)\frac{1}{n-r-1}+\frac{s}{n-r-1} >1$. However, the first $n-r-1$ weights sum exactly to one. Therefore, a transposition between a point with weight $\frac{1}{n-r-1}$ and the point with weight $\frac{s}{n-r-1}$ is not admissible. On the other hand, $s< n-r-2$ yields $\frac{s}{n-r-1}+\frac{1}{n-r-1}+\frac{1}{n-r-1}\leq 1$. Therefore, also a transposition between the point with weight $\frac{s}{n-r-1}$ and a point with weight one is not admissible. If $r\geq 3$ the image of the morphism in Equation~(\ref{mor3}) is $\mathcal{S}_{A_{r,s}[n]} = S_{n-r-1}\times S_r$.\\
Finally, if $s = n-r-2$ we have $\frac{s}{n-r-1}+\frac{1}{n-r-1}+\frac{1}{n-r-1}> 1$, and a transposition between the point with weight $\frac{s}{n-r-1}$ and a point with weight one is admissible. Therefore, the image of the morphism in Equation~(\ref{mor3}) is $\mathcal{S}_{A_{r,s}[n]} = S_{n-r-1}\times S_{r+1}$.\\
Now, let us consider the case $r =n-3$. Note that in Equation~(\ref{wdr2}) we have just two marked points with weight $\frac{1}{n-r-1} = \frac{1}{2}$. Therefore, any transposition is admissible.  
\end{proof}

\begin{Remark}
In Remark \ref{LM} we identified the space $\overline{M}_{0,A_{1,n-3}[n]}$ with the Losev-Manin's space $\overline{L}_{n-2}$. By Theorem \ref{authaka}, we have 
$$\Aut(\overline{L}_{n-2})\cong (\mathbb{C}^{*})^{n-3}\rtimes (S_2\times S_{n-2}).$$
From the description of $\overline{L}_{n-2}$ given in Remark \ref{LM} it is clear that $S_{n-2}$ gives the permutations of $x_{1},...,x_{n-2}$ while $S_{2}$ corresponds to the transposition $x_{0}\leftrightarrow x_{\infty}$.\\
In particular, we recover the classical result on the automorphism group of a del Pezzo surface $\mathcal{S}_{6}$ of degree six $\Aut(\mathcal{S}_{6})\cong (\mathbb{C}^{*})^{2}\rtimes (S_2\times S_{3})$ \cite[Chapter 8]{Do}, \cite[Section 6]{DI}. For details on the automorphisms of a simplicial toric variety see \cite[Section 4]{Co}.\\
Furthermore, when $r = n-3$, $s = 1$ we have $\overline{M}_{0,A_{1,n-3}[n]}\cong \overline{M}_{0,n}$. Therefore, since $\mathcal{S}_{A_{n-3,1}[n]} = S_n$ from Theorem \ref{authaka} we recover \cite[Theorem 3]{BM2}, that is $\Aut(\overline{M}_{0,n})\cong S_n$ for any $n\geq 5$. Finally, we would like to stress that by \cite[Section 4.10]{Li1} and \cite[Section 12.3]{Li2} this result holds for $M_{0,n}$ as well, that is $\Aut(M_{0,n})\cong S_n$ for any $n\geq 5$.
\end{Remark}

Next we compute the automorphism groups of the Hassett's spaces $\overline{M}_{g,A[n]}$ with $g\geq 1$.

\begin{Theorem}\label{autg}
If $g\geq 1$ and $2g-2+n\geq 3$ then the automorphism group of $\overline{M}_{g,A[n]}$ is isomorphic to the group of admissible permutations
$$\Aut(\overline{M}_{g,A[n]})\cong \mathcal{S}_{A[n]}.$$
Furthermore, $\Aut(\overline{M}_{1,A[1]})\cong PGL(2)$, and $\Aut(\overline{M}_{1,A[2]})\cong (\mathbb{C}^{*})^2$ for any vector of weights.
\end{Theorem}
\begin{proof}
If  $2g-2+n\geq 3$ the isomorphism $\Aut(\overline{M}_{g,A[n]})\cong \mathcal{S}_{A[n]}$ follows by Lemma \ref{surj}.\\
By Remark~\ref{n<3} the remaining cases are covered by \cite{Ma}. More precisely $\Aut(\overline{M}_{1,2})\cong (\mathbb{C}^{*})^2$, by \cite[Proposition 3.8]{Ma}, and   $\Aut(\overline{M}_{g})\cong\Aut(\overline{M}_{g,1})$ is the trivial group for $g\geq 2$, by \cite[Propositions 3.5, 2.6]{Ma}.
\end{proof}

\subsubsection*{Automorphisms of $\overline{\mathcal{M}}_{g,A[n]}$}\label{stack}
Let us consider the Hassett's moduli stack $\overline{\mathcal{M}}_{g,A[n]}$ and the natural morphism $\pi:\overline{\mathcal{M}}_{g,A[n]}\rightarrow\overline{M}_{g,A[n]}$ on its coarse moduli space. Since $\pi$ is universal for morphism to schemes for any $\phi\in\Aut(\overline{\mathcal{M}}_{g,A[n]})$ there exists an unique $\tilde{\phi}\in\Aut(\overline{M}_{g,A[n]})$ such that $\pi\circ\phi = \tilde{\phi}\circ\pi$. So we get a morphism of groups
$$\tilde{\chi}:\Aut(\overline{\mathcal{M}}_{g,A[n]})\rightarrow\Aut(\overline{M}_{g,A[n]}).$$
\begin{Proposition}\label{inj}
If $2g-2+n\geq 3$ then the morphism $\tilde{\chi}$ is injective.
\end{Proposition}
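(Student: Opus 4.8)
The plan is to prove that the kernel of $\tilde{\chi}$ is trivial, i.e. that any $\phi\in\Aut(\overline{\mathcal{M}}_{g,A[n]})$ with $\tilde{\phi}=\mathrm{id}_{\overline{M}_{g,A[n]}}$ is $2$-isomorphic to the identity. Such a $\phi$ is an automorphism of the stack lying over the identity of the coarse space, so the whole argument rests on one geometric input: under the hypothesis $2g-2+n\geq 3$ the generic weighted stable curve carries no nontrivial automorphism, so $\pi\colon\overline{\mathcal{M}}_{g,A[n]}\to\overline{M}_{g,A[n]}$ is generically an isomorphism. I would then use this to trivialize $\phi$ on a dense open and finally extend the triviality across the stacky locus.

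First I would isolate the open substack $\mathcal{U}\subseteq\overline{\mathcal{M}}_{g,A[n]}$ parametrizing automorphism-free curves, and check that it is dense exactly because $2g-2+n\geq 3$. A short case analysis suffices: for $g\geq 3$ a general curve already has trivial automorphism group; for $g=2$ the hyperelliptic involution moves a general marked point, so $n\geq 1$ is enough; for $g=1$ the elliptic involution fixes a marked point only at the $2$-torsion, so $n\geq 2$ is enough; and for $g=0$ a general configuration of $n\geq 3$ distinct points on $\mathbb{P}^{1}$ is rigid. In each case the inequality $2g-2+n\geq 3$ guarantees the required number of markings, so $\mathcal{U}$ is dense and $\pi$ restricts to an isomorphism $\pi|_{\mathcal{U}}\colon\mathcal{U}\to U$ onto a dense open $U\subseteq\overline{M}_{g,A[n]}$.

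Next I would observe that $\mathcal{U}$ is intrinsic to the stack, being precisely the locus where $\pi$ is an isomorphism, hence it is preserved by the automorphism $\phi$; and since $\tilde{\phi}=\mathrm{id}$ fixes $U$, transporting through $\pi|_{\mathcal{U}}$ gives $\phi|_{\mathcal{U}}=(\pi|_{\mathcal{U}})^{-1}\circ\tilde{\phi}|_{U}\circ\pi|_{\mathcal{U}}=\mathrm{id}_{\mathcal{U}}$. Thus $\phi$ and $\mathrm{id}$ agree, together with a $2$-isomorphism, over the dense open $\mathcal{U}$.

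Finally I would upgrade this generic agreement to a global one. The source $\overline{\mathcal{M}}_{g,A[n]}$ is smooth, hence normal, and by Hassett's theorem it is proper over $\mathbb{Z}$, hence separated; therefore the relative stack $\underline{\mathrm{Isom}}(\phi,\mathrm{id})\to\overline{\mathcal{M}}_{g,A[n]}$, being the pullback of the (proper, unramified, representable) diagonal of a separated Deligne--Mumford stack, is finite and unramified. The $2$-isomorphism over $\mathcal{U}$ is a section of this finite morphism over a dense open; taking its schematic closure yields a closed substack finite and birational over the normal stack $\overline{\mathcal{M}}_{g,A[n]}$, which by normality maps isomorphically onto it, extending the section. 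This produces a global $2$-isomorphism $\phi\cong\mathrm{id}$, so $\ker(\tilde{\chi})$ is trivial and $\tilde{\chi}$ is injective. I expect this last extension to be the main obstacle: away from $\mathcal{U}$ one must rule out that $\phi$ acts nontrivially on the finite automorphism groups of the special curves, and it is exactly separatedness (making $\underline{\mathrm{Isom}}$ finite unramified) together with normality of the stack that forbids this.
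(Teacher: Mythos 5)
Your proof is correct and is in substance the same as the paper's: the paper simply observes that for $2g-2+n\geq 3$ the stack $\overline{\mathcal{M}}_{g,A[n]}$ is a normal Deligne--Mumford stack with trivial generic stabilizer and invokes \cite[Proposition A.1]{FMN}, and your argument (density of the automorphism-free locus, triviality of $\phi$ there via the coarse map, then extension of the $2$-isomorphism across the stacky locus using the finite unramified $\underline{\mathrm{Isom}}$ space and normality) is precisely the standard proof of that cited result. So you have supplied a correct self-contained version of the same argument rather than a genuinely different route.
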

\begin{proof}
For the values of $g$ and $n$ we are considering $\overline{\mathcal{M}}_{g,A[n]}$ is a normal Deligne-Mumford stack with trivial generic stabilizer. To conclude it is enough to apply \cite[Proposition A.1]{FMN}.
\end{proof}
By Proposition \ref{inj} for any $g\geq 1, n$ such that $2g-2+n\geq 3$ the group $\Aut(\overline{\mathcal{M}}_{g,A[n]})$ is a subgroup of $\mathcal{S}_{A[n]}$. Note that an admissible transposition $i\leftrightarrow j$ defines an automorphism of $\overline{\mathcal{M}}_{g,A[n]}$. Indeed, the contraction of a rational tail with three special points, that is a nodally attached rational tail with two marked points, does not affect either the coarse moduli space or the stack because it is a bijection on points and preserves the automorphism groups of the objects. However, it may induce a non trivial transformation on the universal curve.
\begin{Theorem}\label{autstack}
The automorphism group of the stack $\overline{\mathcal{M}}_{g,A[n]}$ is isomorphic to the group of admissible permutations
$$\Aut(\overline{\mathcal{M}}_{g,A[n]})\cong \mathcal{S}_{A[n]}$$
for any $g\geq 1$ and $n$ such that $2g-2+n\geq 3$. Furthermore, $\Aut(\overline{\mathcal{M}}_{1,A[1]})\cong \mathbb{C}^{*}$ while $\Aut(\overline{\mathcal{M}}_{1,A[2]})$ is trivial.
\end{Theorem}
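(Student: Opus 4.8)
The plan is to obtain the main isomorphism almost formally from the results already in place, and then to treat the two exceptional pairs $(g,n)=(1,1),(1,2)$ by a separate, hands-on argument. For $2g-2+n\geq 3$, Proposition \ref{inj} already gives that $\tilde{\chi}\colon\Aut(\overline{\mathcal{M}}_{g,A[n]})\to\Aut(\overline{M}_{g,A[n]})$ is injective, while Theorem \ref{autg} identifies the target with $\mathcal{A}_{A[n]}$ via $\chi$. So the only thing left to prove in the main case is that the image of $\tilde{\chi}$ is all of $\Aut(\overline{M}_{g,A[n]})$; equivalently, that every admissible transposition lies in $\im(\tilde{\chi})$.

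To this end I would lift each admissible transposition $i\leftrightarrow j$ to an honest automorphism of the stack by relabelling the $i$-th and $j$-th sections of every family of weighted stable curves. The only point to check is that this relabelling sends stable families of type $(g,A)$ to stable families of the same type. This is exactly what admissibility buys us: by Definition \ref{atrans} the inequality $a_i+\sum_k a_{h_k}\leq 1$ holds if and only if $a_j+\sum_k a_{h_k}\leq 1$ for every collision of three or more markings, so the stability condition of Definition \ref{defha} is preserved under the swap. The resulting $2$-isomorphism of the moduli functor induces on coarse spaces precisely the automorphism $\tilde{\phi}_{i,j}$ of Proposition \ref{atra}; this is the content of the remark preceding the theorem, namely that contracting a rational tail carrying three special points changes neither the coarse space nor the stack. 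Since $\mathcal{A}_{A[n]}$ is generated by admissible transpositions and each of these coarse automorphisms is hit by $\tilde{\chi}$, the image of $\tilde{\chi}$ equals $\Aut(\overline{M}_{g,A[n]})$; combined with injectivity this gives $\Aut(\overline{\mathcal{M}}_{g,A[n]})\cong\mathcal{A}_{A[n]}$.

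For the exceptional cases $\overline{\mathcal{M}}_{1,A[1]}$ and $\overline{\mathcal{M}}_{1,A[2]}$ one has $2g-2+n<3$, so Proposition \ref{inj} no longer applies: the generic stabilizer is nontrivial and there are stack automorphisms acting trivially on the coarse space. Here I would first reduce to the Deligne--Mumford weights by Hassett's Corollary 4.7, which for $n\leq 2$ makes the reduction morphism an isomorphism, so that $\overline{\mathcal{M}}_{1,A[1]}\cong\overline{\mathcal{M}}_{1,1}$ and $\overline{\mathcal{M}}_{1,A[2]}\cong\overline{\mathcal{M}}_{1,2}$; the identifications $\Aut(\overline{\mathcal{M}}_{1,1})\cong\mathbb{C}^{*}$ and $\Aut(\overline{\mathcal{M}}_{1,2})$ trivial then follow from the stack computations in \cite{Ma}, in parallel with the coarse statements used in Proposition \ref{Mg1}.

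The bulk of the argument is thus formal once Proposition \ref{inj} and Theorem \ref{autg} are granted. I expect the genuinely delicate part to be the two exceptional cases, where injectivity of $\tilde{\chi}$ fails and the extra stack automorphism (the $\mathbb{C}^{*}$ arising from the generic automorphisms of a pointed elliptic curve) must be extracted from the explicit geometry of $\overline{\mathcal{M}}_{1,1}$; a lesser subtlety, already dealt with in the remark, is verifying that an admissible transposition lifts to the stack and not merely to the coarse moduli space, for which the symmetry built into Definition \ref{atrans} is precisely the hypothesis that makes the relabelled universal family stable.
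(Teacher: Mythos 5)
Your architecture is exactly the paper's: injectivity of $\tilde{\chi}$ from Proposition \ref{inj}, identification of the target with $\mathcal{A}_{A[n]}$ from Theorem \ref{autg}, surjectivity by lifting admissible transpositions to the stack, and the two exceptional cases via $\overline{\mathcal{M}}_{1,A[1]}\cong\overline{\mathcal{M}}_{1,1}\cong\mathbb{P}(4,6)$ and the identification of $\overline{\mathcal{M}}_{1,A[2]}$ with $\overline{\mathcal{M}}_{1,2}$ followed by the computations of \cite{Ma}. The exceptional cases are handled as in the paper.

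However, your justification of surjectivity contains a genuine error. You claim that relabelling the $i$-th and $j$-th sections sends stable families of type $(g,A)$ to stable families of type $(g,A)$ because admissibility makes the inequalities of Definition \ref{defha} symmetric under the swap. But Definition \ref{atrans} only imposes the equivalence $a_{i}+\sum_{k}a_{h_{k}}\leq 1\iff a_{j}+\sum_{k}a_{h_{k}}\leq 1$ for $r\geq 2$, that is for collisions of $x_{i}$ with at least two further markings; it says nothing about collisions of exactly two markings. Concretely, take $g\geq 1$, $n=4$ and weights $(1/10,\,1/2,\,3/5,\,2/5)$: the transposition $1\leftrightarrow 2$ is admissible (the only relevant sum is $a_{3}+a_{4}=1$, and adding either $a_{1}$ or $a_{2}$ exceeds $1$), yet a curve with $x_{1}=x_{3}$ is stable since $a_{1}+a_{3}=7/10\leq 1$, while the relabelled configuration $x_{2}=x_{3}$ violates Definition \ref{defha} since $a_{2}+a_{3}=11/10>1$. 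So pure relabelling is not an endomorphism of the moduli functor: on the locus where $x_{i}$ meets exactly one other marking whose weight is compatible with $a_{i}$ but not with $a_{j}$, one must in addition insert (or contract) a three-pointed rational bridge. This is precisely what the remark preceding the theorem and Proposition \ref{atra}(a)$\Rightarrow$(b) supply: the map is built as $\rho\circ\phi_{i,j}\circ\rho^{-1}$ from $\overline{\mathcal{M}}_{g,n}$, and the fact that a three-pointed rational tail is automorphism-free and contributes no moduli is what lets the modification descend to an automorphism of the stack. Your proof is repaired by replacing ``relabelling preserves stability'' with that argument; as written, this step fails.
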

\begin{proof}
By Proposition \ref{inj} the surjective morphism
$$\tilde{\chi}:\Aut(\overline{\mathcal{M}}_{g,A[n]})\rightarrow \mathcal{S}_{A[n]}$$
is an isomorphism. The isomorphism $\Aut(\overline{\mathcal{M}}_{1,A[1]})\cong \mathbb{C}^{*}$ derives from $\overline{\mathcal{M}}_{1,A[1]}\cong\overline{\mathcal{M}}_{1,1}\cong\mathbb{P}(4,6)$. Since a rational tail with three special points, that is a nodally attached rational tail with two marked points, is automorphisms-free the reduction morphism
$$\rho:\overline{\mathcal{M}}_{1,2}\rightarrow\overline{\mathcal{M}}_{1,A[2]}$$
is a bijection on points and preserves the automorphism groups of the objects. The stacks $\overline{\mathcal{M}}_{1,2}$ and $\overline{\mathcal{M}}_{1,A[2]}$ are isomorphic. We conclude by \cite[Proposition 4.5]{Ma}.
\end{proof}

\end{document}